\newtheorem{theorem}{Theorem}[section]
\newtheorem{corollary}[theorem]{Corollary}
\newtheorem{deflem}[theorem]{Definition-Lemma}
\newtheorem{lemma}[theorem]{Lemma}
\newtheorem{proposition}[theorem]{Proposition}
\theoremstyle{definition}
\newtheorem{definition}[theorem]{Definition}
\newtheorem{remark}[theorem]{Remark}
\newtheorem{example}[theorem]{Example}
\newcommand{\suchthat}{\;\ifnum\currentgrouptype=16 \middle\fi|\;}
\newcommand{\af}[4]{\tenq{\mathcal{A}}_{\scalebox{0.5}{#1}\!} (#2,#3)_{\scalebox{0.6}{$#4$}}}
\newcommand{\afz}[3]{\tenq{\mathcal{A}}_{\scalebox{0.5}{#1}\!} (#2,#3)}
\newcommand\restr[2]{{
\left.\kern-\nulldelimiterspace 
#1 
\vphantom{\big|} 
\right|_{#2} 
}}
\newcommand{\sbfrac}[3]{\frac{ #1}{#2}{}^{\scaleto{ #3}{3.2 pt} }}
\newcommand*{\doublerightarrow}[2]{\mathrel{
		\settowidth{\@tempdima}{$\scriptstyle#1$}
		\settowidth{\@tempdimb}{$\scriptstyle#2$}
		\ifdim\@tempdimb>\@tempdima \@tempdima=\@tempdimb\fi
		\mathop{\vcenter{
				\offinterlineskip\ialign{\hbox to\dimexpr\@tempdima+1em{##}\cr
					\rightarrowfill\cr\noalign{\kern.5ex}
					\rightarrowfill\cr}}}\limits^{\!#1}_{\!#2}}}
\newcommand\tenq[2][1]{%
 \def\useanchorwidth{T}%
  \ifnum#1>1%
    \stackunder[0pt]{\tenq[\numexpr#1-1\relax]{#2}}{\scriptscriptstyle\sim}%
  \else%
    \stackunder[1pt]{#2}{\scriptscriptstyle\sim}%
  \fi%
}
\title{\bfseries A note on the moduli spaces of free algebras of rank 2}
\author{Sophie Marques}
\begin{document}

\setcounter{tocdepth}{3}
\maketitle
\begin{center} 
Department of Mathematical Sciences, 
University of Stellenbosch, 
Stellenbosch, 7600,
South Africa\\
NITheCS (National Institute for Theoretical and Computational Sciences), 
South Africa \\
 e-mail: smarques@sun.ac.za
\end{center}

\begin{abstract}
In this paper, we present a formulation of the moduli problem for rank-2 algebras over general base rings in functorial terms, providing presentations as presheaf quotients of affine schemes by group scheme actions.
\\

\noindent \textbf{Keywords:} Moduli space, free algebras of rank 2, functor of points, group schemes, separable algebras, radical algebras, Artin-Schreier algebras, quadratic polynomials, parameters, group scheme action.

%
%
%
%
%
%

\end{abstract}
  \tableofcontents
  
  \section*{Acknowledgement}
The author gratefully acknowledges Qing Liu for numerous insightful discussions that have significantly shaped the development of this work especially the results of section \S2. The author also thanks John Voight for valuable conversations and helpful feedback.
\section*{Introduction}

The classification of finite-rank algebras up to isomorphism is a classical and extensively studied problem in algebra and number theory (see, for instance, \cite{Delone, Meyer, Kitamura, micali, Szeto, bhargava, bhargava2, poonen, voight, marques, mpendulo}). Such classification efforts aim to describe the set of isomorphism classes of algebras over a fixed base ring by identifying invariants and constructing parameter spaces that encode these classes in a coherent and geometrically meaningful way. These parameter spaces, often realized as {moduli spaces}, provide a systematic framework for organizing algebraic structures and offer powerful tools for investigating their geometry, arithmetic, and deformation theory.

This paper focuses on the moduli theory of free algebras of rank $2$ over commutative rings. These algebras occupy a particularly tractable corner of the broader landscape: they are necessarily quadratic, and in the separable case, they are always Galois. Rank-2 free algebras form the last and only family of finite-rank algebras that remains amenable to a complete and explicit classification. Nonetheless, even in this relatively simple setting, several structural subtleties remain unresolved, particularly concerning the geometry of their moduli spaces and the extent to which these spaces reflect deeper algebraic properties.

While the literature on this subject is extensive-including foundational contributions by Voight \cite{voight}, Hahn \cite{hahn}, Small \cite{small}, and Pirashvili \cite{Pirashvili}, it can be challenging to distinguish which results are standard and which remain underexplored. Where possible, we have cited appropriate references. 

We formulate the moduli problem for rank-2 algebras over a base ring~$R$ in functorial terms. Although such functors are not generally representable by schemes, we show that in several natural cases, including separable, (separable) radical, and Artin-Schreier algebras, the moduli functor admits a presentation as a presheaf quotient of an affine scheme by a group scheme action. The central goal of this paper is to make this statement precise and explicit

Our main result (Theorem~\ref{main}) establishes that the moduli functors associated with any of these families of algebras can be expressed as quotients of affine lines by group scheme actions, with respect to all the parameters we have identified. Although these presentations are not canonical, they nonetheless capture the inherent complexity of the moduli problems. Whether a canonical description exists, possibly one that minimizes the number of parameters, remains an open question. Theorem~\ref{main} is closely related to \cite[Theorems A, B, C, and Section 3, particularly pp. 499-500]{voight}, where some of the moduli spaces and their parametrizations were described with great detail. The primary focus of \cite{voight} was the uniqueness of the monoid structure they possess. We include these results here to provide a cohesive narrative that ties together the ideas we aim to develop.

A motivation behind this work is to examine the extent to which the structure of a moduli space encodes the properties of the objects it classifies. This perspective gives rise to several guiding questions, some of which remain open by the end of the paper, but nonetheless shape its overall organization:

\begin{itemize}
  \item \textsf{Dimension.} What is the minimal number of parameters required to classify rank-2 algebras over a given base? In Section~1, we explore this idea: while general quadratic algebras over an arbitrary base ring typically require two parameters, radical and Artin--Schreier extensions can be described by a single parameter, as are separable algebras via their discriminants. However an open question remains, can we formalize a notion of {minimal parametrization} on the moduli space directly? 

\item \textsf{Structure of the group action.}  A natural question is whether the moduli spaces of rank-2 free algebras satisfying certain algebraic properties can always be realized as quotients of affine schemes by group scheme actions. Theorem \ref{main} provides a positive answer for the cases of separable, Artin-Schreier, and radical algebras, though it is plausible that additional properties may also conform to this pattern. Another natural line of inquiry concerns the types of quotients and group actions that can arise as moduli spaces. Section 2 begins by analyzing what ultimately turn out to be the inertia groups associated with the action introduced in the final section, evaluated at maximal ideals. These groups also correspond to the automorphism groups of a given algebra. We further examine their fibers over algebraically closed fields. Notably, all three group schemes of order two, $\mathbb{Z}/2\mathbb{Z}$, $\mu_2$, and $\alpha_2$, appear in this setting, along with the infinite group scheme $\mathbb{G}_m$. Moreover, under suitable regularity assumptions on the base ring $R$, we show that the isomorphism classes are uniquely determined by the discriminant modulo squares (see Corollary \ref{regular}). We are also grateful to Jean Gillibert for pointing out a related interesting result in \cite[Proposition 2.26]{Dallaporta}.

  \item \textsf{Geometric and categorical directions.} One could further study the moduli problem by stackifying the groupoid associated with the presheaf quotient and analyzing the resulting quotient stack. Alternatively, one could adopt a categorical viewpoint to investigate what the ``correct'' setting is for understanding such moduli problems and whether deeper categorical structures are at play.
\end{itemize}

The structure of the paper is as follows. In Section~1, we develop the general functorial formalism for studying moduli of free algebras of rank 2, including localizations, points, and fiber products of the different functors we want to study. While some of this material may be familiar, it plays an essential role in subsequent arguments and is included for completeness.

Section~2 presents explicit parameterizations for the various moduli problems under consideration: free algebras of rank $2$, separable algebras, (separable) radical algebras, and Artin-Schreier algebras. For each case, we identify different types of parameters to describe those spaces. 

In Section~~3, we study the scheme that defines isomorphisms between algebras of rank~~2, as well as the group schemes governing automorphisms of a free algebra of the same rank. We analyze the fibers of these schemes and identify conditions under which the discriminant is sufficient to distinguish non-isomorphic algebras (see Corollary~\ref{propd}, Example~\ref{counter}, and Corollary~\ref{regular}).

In Section~4, we explicitly compute the Hopf algebra coactions corresponding to each group scheme action under consideration. While these derivations may be routine for specialists, we present them in full to bridge the algebraic and geometric perspectives. We proceed step by step: first computing the group structure and its action at the level of rings, then translating this via the Yoneda lemma and the functor of points into the category of schemes, and finally returning to rings using the equivalence between affine schemes and commutative rings. The coactions obtained are of intrinsic algebraic interest, as they enable the computation of inertia groups, higher ramification groups, local group actions at primes, the structure of the augmentation ideal, and other invariants that distinguish them from arbitrary coactions. We conclude by expressing each moduli space under consideration as a presheaf quotient of a affine scheme by a group scheme action.

Our goal here is to formulate the moduli problem for rank-2 algebras over a base ring $R$ in functorial terms, with a focus on explicit and trackable constructions. This approach aims to broaden the understanding of moduli spaces, an area still rich with open questions and lacking definitive answers. We hope this presentation serves as a valuable complement to existing work for researchers in algebraic geometry, algebra, and related fields.


\section{Notations and preliminaries} 
In this paper, $S$ denotes a commutative and unitary ring. 
We introduce the following categories:
\begin{itemize}
\item $S$-$\mathbf{Alg}$ denotes the category of $S$-algebras.
\item $\mathbf{Sets}$ denotes the category of sets.
\item $\mathbf{Fields}$ denotes the category of fields.
\end{itemize}
In this paper, "ring" refers to commutative and unitary rings, "algebra" refers to commutative and unitary algebras, and "morphisms" are unitary ring morphisms. Moreover, when \( D \) is a ring and \( T \) is a \( D \)-algebra with the structural map \( \phi: D \rightarrow T \),
\begin{itemize}
\item we denote \( \phi(u) \) simply as \( u \) for any \( u \in D \), unless it becomes necessary to distinguish them.
\item when \( T = D/I \) with \( I \) being an ideal of \( D \) and $\phi$ is the quotient map, we also denote \( \phi(u) \) as \( \overline{u} \). 
\item we denote the unit of $D$ as $1_D$.
\end{itemize}
Naturally, we adopt this notation when it does not obscure the intended meaning in context.

We define radical and Artin-Schreier algebras, as they will be central to the focus of this paper.

\begin{definition}
\begin{itemize}
    \item An \( R \)-algebra of rank $2$ is called {\sf radical over $R$} if it is isomorphic to \( R[x] / \langle x^2 + a\rangle \) for some \( a \in R \). When a $R$-algebra $A$ is radical of rank $2$, we say that $u$ is a {\sf radical generator for $A$ over $R$} if $u^2\in R$ and $A=R \cdot 1_A \oplus R \cdot u$.
    \item An \( R \)-algebra of rank $2$ is called {\sf Artin-Schreier over $R$} if it is isomorphic to \( R[x] / \langle x^2 - x + a \rangle \) for some \( a \in R \).  When a $R$-algebra $A$ is Artin-Schreier, we say that $u$ is an {\sf Artin-Schreier generator for $A$ over $R$} if $u^2-u \in R$  and $A=R \cdot 1_A \oplus R \cdot u$.
\item An \( R \)-algebra \( A \) is called {\sf separable over \( R \)} if the multiplication map \(\mu: A \otimes_R A \to A\), defined by \(\mu(a \otimes b) = ab\) for any $a,b \in A$, admits a section \(\sigma: A \to A \otimes_R A\).

\item An \( R \)-algebra \( A \) is called {\sf \( G \)-Galois over \( R \)} if \( R = A^G \) (the ring of \( G \)-invariant elements in \( A \)) and the Galois map \(\gamma: A \otimes_R A \to \operatorname{Map}(G, A)\) is defined by \(\gamma(a \otimes b)(g) = a g(b)\) for all \( a, b \in A \) and \( g \in G \).
\end{itemize}
\end{definition}

\noindent To simplify notation in the sections below, we will use the following symbols:
\begin{itemize}
    \item "{\sf F}" for free.
    \item "{\sf SF}" for separable free.
    \item "{\sf R}" for radical.
    \item "{\sf SR}" for separable radical.
    \item "{\sf AS}" for Artin-Schreier.
        \item "{\sf SAS}" for separable Artin-Schreier.
\end{itemize}

Now for ${\sf P} \in \{ {\sf F}, {\sf SF}, {\sf R}, {\sf SR}, {\sf AS}, {\sf SAS}\}$.
We introduce the following notation: for any $S$-algebra $R$ and $R$-algebra $A$, and prime ideal $\mathfrak{p}$ of $S$,
\begin{itemize} 
\item \([A]_{{\sf P}, n, R}\) (or simply \([A]\) when there is no risk of confusion) denote the set of all algebras \({\sf P}\) of rank $n$ isomorphic to \(A\) as \(R\)-algebras.
\item $A_{\mathfrak{p}}$ denotes $A \otimes_S S_{\mathfrak{p}}$. 
\end{itemize} 
We now define the moduli space in a functorial manner as follows. These definitions can, of course, be generalized to any rank \( n \).

\begin{definition} Let ${\sf P} \in \{ \text{ {\sf F}, {\sf SF}, {\sf R}, {\sf SR}, {\sf AS} }\}$.
We define the {\sf (functorial) moduli space of the \( {\sf P} \) \( S \)-algebras of rank $n$ up to isomorphism} as the covariant functor \(\af{{\sf P}}{n}{-}{S}\) from the category of \( S \)-algebras to the category of sets defined as follows:

\begin{itemize}
    \item \(\af{{\sf P}}{n}{R}{S} = \{ [A] : A \text{ is an } R\text{-algebra}  \ {\sf P} \text{ of rank $n$} \}\), for any $S$-algebra $R$.
    \item \(\af{{\sf P}}{n}{\varphi}{S}\) is a morphism from \(\af{{\sf P}}{n}{R}{S}\) to \(\af{{\sf P}}{n}{R'}{S}\) that sends the set \( [A] \) to \( [A \otimes_R R'] \), for any morphism \(\varphi : R \rightarrow R'\) in \( S \)-\(\mathbf{Alg}\).
\end{itemize}

We denote \(\af{{\sf P}}{n}{-}{\mathbb{Z}}\) by \(\afz{{\sf P}}{n}{-}\). \\
Given a ring morphism $\psi: S\rightarrow S'$, we have a natural transformation $\iota_{\psi}$ from $\af{{\sf P}}{n}{-}{S}$ to $\af{{\sf P}}{n}{-}{S'}$, defined for all $S$-algebra $R$ as the map ${\iota_{\psi}}_R$ from $\af{{\sf P}}{n}{R}{S}$ to $\af{{\sf P}}{n}{R}{S'}$ sending $[A]_{{\sf P}, n, R}$ to $[A\otimes_S S']_{{\sf P}, n , R}$
\end{definition}

We can define a localized functorial moduli space as follows. 
\begin{definition} 
Let \({\sf P} \in \{ \text{{\sf F}, {\sf SF}, {\sf R}, {\sf SR}, {\sf AS}} \}\) and let \(\mathfrak{p}\) be a prime ideal in \(S\). We define the {\sf (functorial) moduli space of the \( S \)-algebras \( {\sf P} \) of rank $n$ localized at \(\mathfrak{p}\)} as the covariant functor \(\af{{\sf P}}{n}{R}{\mathfrak{p}}\) that maps from the category of \( S \)-algebras to the category of sets, defined as follows:

\begin{itemize} 
    \item \(\af{{\sf P}}{n}{R}{\mathfrak{p}} \) \(= \{ [A_{\mathfrak{p}}]_{{\sf P}, n, R_{\mathfrak{p}}} : A \text{ is an } R\text{-algebra } {\sf P} \}\), for any \(R\) \(S\)-algebra.
    \item \(\af{{\sf P}}{n}{\varphi}{\mathfrak{p}}\) is a morphism from \(\af{{\sf P}}{n}{R}{\mathfrak{p}}\) to \(\af{{\sf P}}{n}{R'}{\mathfrak{p}}\), which sends the set \([A_{\mathfrak{p}}]_{{\sf P}, n, R_{\mathfrak{p}}} \) to \( [(A \otimes_R R')_{\mathfrak{p}}]_{{\sf P}, n, R'_{\mathfrak{p}}} \), for any morphism \(\varphi : R \rightarrow R'\) in \(S\)-\(\mathbf{Alg}\).
\end{itemize}

\noindent We also have a natural transformation \(\ell_{\mathfrak{p}}\) from \(\af{{\sf P}}{n}{-}{S}\) to \(\af{{\sf P}}{n}{-}{\mathfrak{p}}\), defined for all \(S\)-algebras \(R\) as \({\ell_{\mathfrak{p}}}_R\) is the map from \(\afz{{\sf P}}{n}{R}\) to \(\af{{\sf P}}{n}{R}{\mathfrak{p}}\), sending \( [A] \) to \( [A_{\mathfrak{p}}]_{{\sf P}, n, R_{\mathfrak{p}}} \).
\end{definition}


\noindent We conclude this section with straightforward yet useful lemmas. In this paper, for any natural number $n$, $n \cdot 1_R$ denotes $1_R+ \cdots + 1_R$ n times and $(-n) \cdot 1_R= - (n \cdot 1_R)$.

\begin{lemma}\label{charp}
Let \( p \) be a prime number. The following statements are equivalent:
\begin{enumerate}
    \item there exist a prime ideal $\mathfrak{p}$ such that the residue field at $\mathfrak{p}$ is of characteristic $p$;
    \item \( p \) is not a unit in \( R \).
\end{enumerate}
Moreover, if \( k(\mathfrak{p}) \) is the residue field at a prime ideal $\mathfrak{p}$ of $R$ of characteristic $p$, then $p \in \mathfrak{p}$.
\end{lemma}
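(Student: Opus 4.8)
The plan is to route both implications through a single observation about how the integer $p$ sits inside a prime ideal, which will also yield the ``moreover'' clause for free. Recall that the residue field is $k(\mathfrak{p}) = \operatorname{Frac}(R/\mathfrak{p})$, and that its characteristic is controlled by the image of $p \cdot 1_R$. First I would record the elementary fact that, for any field $K$ receiving a ring map from $R$, the characteristic of $K$ equals $p$ if and only if $p \cdot 1_K = 0$. This uses that $p$ is prime: the characteristic of a field divides $p$, and it cannot equal $1$ since $1 \neq 0$, so the only possibility compatible with $p \cdot 1_K = 0$ is exactly $p$.

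Next I would translate the condition $p \cdot 1_{k(\mathfrak{p})} = 0$ back into $R$. Since $R/\mathfrak{p}$ is an integral domain that embeds into its fraction field $k(\mathfrak{p})$, the image of $p \cdot 1_R$ vanishes in $k(\mathfrak{p})$ precisely when it vanishes in $R/\mathfrak{p}$, i.e.\ precisely when $p \cdot 1_R \in \mathfrak{p}$. Combining this with the previous step gives the key equivalence: $k(\mathfrak{p})$ has characteristic $p$ if and only if $p \cdot 1_R \in \mathfrak{p}$. This is exactly the assertion of the ``moreover'' statement, so that clause is settled at this point and can be reused in what follows.

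With the key equivalence in hand the two implications are immediate. For $(1) \Rightarrow (2)$, if some prime $\mathfrak{p}$ has residue characteristic $p$, then $p \cdot 1_R \in \mathfrak{p}$; since a prime ideal is proper it contains no units, so $p$ is not a unit in $R$. For $(2) \Rightarrow (1)$, if $p$ is not a unit then the principal ideal $(p \cdot 1_R)$ is proper, hence is contained in some maximal ideal $\mathfrak{m}$; then $p \cdot 1_R \in \mathfrak{m}$, and the key equivalence shows that the residue field $k(\mathfrak{m})$ has characteristic $p$, giving the desired prime.

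I do not expect a genuine obstacle here, as the argument is elementary. The only points that require care are the use of the primality of $p$ to pin the characteristic down to exactly $p$ rather than merely a divisor, and the fact that the existence of a maximal ideal containing $(p \cdot 1_R)$ presupposes $R \neq 0$. In the degenerate case $R = 0$ both statements fail simultaneously (there are no primes, and $p$ is trivially a unit), so the equivalence still holds, and I would note this briefly for completeness.
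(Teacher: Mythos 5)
Your proof is correct and follows essentially the same route as the paper: both reduce the statement to the equivalence ``$k(\mathfrak{p})$ has characteristic $p$ if and only if $p \cdot 1_R \in \mathfrak{p}$'' and then invoke the fact that an element is a unit precisely when it lies in no prime (equivalently, no maximal) ideal. You simply spell out the details the paper leaves as ``not hard to prove,'' and your remarks on the primality of $p$ and the degenerate case $R = 0$ are sensible additions.
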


\begin{proof}
Let \(\varphi: \mathbb{Z} \rightarrow R\) be the only ring homomorphism from $\mathbb{Z}$ to $R$, and let \(\zeta_\mathfrak{p}: R \rightarrow k(\mathfrak{p})\) be the canonical morphism from \( R \) to the residue field at \( \mathfrak{p} \). It is not hard to prove that the following assertions are equivalent:
\begin{enumerate}
    \item \( k(\mathfrak{p}) \) has characteristic \( p \);
    \item \(\operatorname{ker} (\zeta_\mathfrak{p} \circ \varphi) = p\mathbb{Z}\);
    \item \(\zeta_\mathfrak{p} (p\cdot R) = 0\);
    \item \( p\cdot 1_R \in \mathfrak{p} \).
\end{enumerate}
The result follows directly from these equivalences, as being a unit is equivalent to not being contained in any prime ideal.
\end{proof}

\begin{lemma}\label{pq}
Let \( p \) and \( q \) be distinct prime numbers in $\mathbb{Z}$. We have \( q\cdot 1_R \in (p\cdot 1_R) R \) if and only if \( p\cdot 1_R \) is invertible in $R$.
\end{lemma}

\begin{proof}
We suppose that \( q\cdot 1_R \in (p\cdot 1_R) R \). Thus, there exists an element \( u \in R \) such that \( q \cdot 1_R = (p\cdot 1_R )u \). 

By B\'ezout's lemma, we can find integers \( a \) and \( b \) such that \( a p\cdot 1_R + bq\cdot 1_R= 1_A \). Substituting \( q\cdot 1_R = (p\cdot 1_R)u \) into this equation gives:

\[
a(p\cdot 1_R) + b((p\cdot 1_R)u) = 1,
\]

which simplifies to:

\[
(a + ub) \cdot (p\cdot 1_R) = 1.
\]
Therefore, \( p\cdot 1_R \) is invertible in $R$. The converse is straightforward, thus completing the proof.
\end{proof}

\section{Characterizing certain types of free algebras of rank $2$}
\noindent We observe that any free algebra of rank 2 can be defined by a single polynomial, a characteristic that does not typically extend to higher ranks. This property facilitates the more explicit exploration of the moduli space of free algebras of rank 2 in this paper. The proof is omitted and can be found in \cite[Lemma 3.2]{voight}.
\begin{proposition}  \label{free}
Let \( A \) be an \( R \)-algebra that is free of rank \( 2 \). Then there exist elements \( a, b \in R \) such that 
\[
\displaystyle A \simeq \frac{R[x]}{\langle x^2 + ax + b \rangle}.
\] 
More precisely, given a basis \( b_1, b_2 \) for \( A \) over \( R \), there exist elements \( u_1, u_2, r_1, r_2 \in R \) such that \( 1_A = u_1 \cdot   b_1 + u_2  \cdot  b_2 \) and \( r_1 u_1 + r_2 u_2 = 1_R \), and by defining \( e = -r_1 \cdot  b_1 + r_2  \cdot  b_2 \), we can express \( A \) as \( A = R \cdot 1_A \oplus R \cdot e \), where \( e^2 = -b \cdot 1_A - a \cdot e \) for some \( a, b \in R \). Mapping \( e \) to \( \bar{x} \) gives an isomorphism between \( A \) and \( \frac{R[x]}{\langle x^2 + ax + b \rangle} \).
\end{proposition}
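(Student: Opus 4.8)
The plan is to exhibit an explicit $R$-basis of $A$ of the shape $\{1_A, e\}$ and then extract from it the monic quadratic relation. Starting from the given basis $b_1, b_2$, write $1_A = u_1 \cdot b_1 + u_2 \cdot b_2$ with uniquely determined $u_1, u_2 \in R$. The crucial point is that the coordinate pair $(u_1, u_2)$ is unimodular, i.e. $(u_1, u_2) = R$ as an ideal. To see this I would argue locally: if $(u_1, u_2)$ were proper, it would lie in some maximal ideal $\mathfrak{m}$, and reducing modulo $\mathfrak{m}$ would force the image of $1_A$ to vanish in $A \otimes_R R/\mathfrak{m}$, since that image is $\overline{u_1}\,\overline{b_1} + \overline{u_2}\,\overline{b_2}$. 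But $A \otimes_R R/\mathfrak{m}$ is a $2$-dimensional algebra over the field $R/\mathfrak{m}$ (as $A$ is free of rank $2$), hence a nonzero ring in which the image of $1_A$ is the identity and is therefore nonzero --- a contradiction. Unimodularity then produces $r_1, r_2 \in R$ with $r_1 u_1 + r_2 u_2 = 1_R$.

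Next I would use these B\'ezout coefficients to complete $1_A$ to a basis. Setting $e = -r_2 \cdot b_1 + r_1 \cdot b_2$, the change-of-basis matrix from $\{b_1, b_2\}$ to $\{1_A, e\}$ has determinant $r_1 u_1 + r_2 u_2 = 1_R$, hence is invertible over $R$; thus $A = R \cdot 1_A \oplus R \cdot e$. (Any completion of the unimodular row $(u_1,u_2)$ to a matrix of unit determinant serves equally well, which is exactly the freedom the statement records.) Because $\{1_A, e\}$ is a basis, the element $e^2 \in A$ admits a unique expression $e^2 = -b \cdot 1_A - a \cdot e$ for suitable $a, b \in R$, and this is precisely the pair of coefficients of the sought polynomial.

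Finally I would build the comparison isomorphism. The universal property of the polynomial ring gives an $R$-algebra map $R[x] \to A$ sending $x \mapsto e$; since $e^2 + a \cdot e + b \cdot 1_A = 0$, it descends to an $R$-algebra homomorphism $R[x]/\langle x^2 + ax + b \rangle \to A$. As $x^2 + ax + b$ is monic of degree $2$, the source $R[x]/\langle x^2 + ax + b \rangle$ is free over $R$ with basis $\{\bar{1}, \bar{x}\}$ (by the division algorithm), and the map carries this basis to the basis $\{1_A, e\}$ of $A$; an $R$-linear map sending a basis bijectively to a basis is an isomorphism, which completes the argument. I expect the only genuine subtlety to be the unimodularity of $(u_1, u_2)$ --- everything after it is routine linear algebra together with the universal property of $R[x]$. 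The local-to-global (reduction modulo $\mathfrak{m}$) step is what makes the key point work uniformly over an arbitrary base ring, where $1_A$ need not itself be extendable to a basis by a naive direct computation.
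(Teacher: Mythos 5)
Your proof is correct. The paper itself omits the proof of this proposition, deferring to \cite[Lemma 3.2]{voight}, and your argument is exactly the standard one used there: the coordinate vector of $1_A$ in any basis is unimodular (your reduction modulo a maximal ideal is the right way to see this over an arbitrary base), a B\'ezout relation completes $1_A$ to a basis $\{1_A,e\}$ via a determinant-one change of basis, and the division algorithm for the monic polynomial $x^2+ax+b$ makes the comparison map an isomorphism of free modules, hence of algebras. One small point in your favour: your choice $e=-r_2\cdot b_1+r_1\cdot b_2$ is the one whose change-of-basis matrix has determinant $r_1u_1+r_2u_2=1_R$; the formula $e=-r_1\cdot b_1+r_2\cdot b_2$ printed in the proposition gives determinant $u_1r_2+u_2r_1$ and only works after relabelling $r_1\leftrightarrow r_2$, so the statement as printed contains a harmless index swap that your version silently corrects.
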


One significant class of free algebras of rank \(2\) that emerges from the study of polynomials over a field with characteristic not equal to \(2\) is the class of radical algebras of rank \(2\). In the following proposition, we characterize the conditions under which a free algebra of rank \(2\) is radical, solely in terms of the coefficients of its defining polynomial, irrespective of the characteristic.

\begin{proposition}\label{radical}
Let 
\[
\displaystyle A = \frac{R[x]}{\langle x^2 + ax + b\rangle},
\]
where \(a, b \in R\). The following statements are equivalent: 
\begin{enumerate}
    \item \(A\) is a radical algebra.
    \item There exists \(v \in R\) such that \(a = 2v\).
\end{enumerate}
If $(2)$ is satisfied, \(\overline{x} + v\) is a radical generator for \(A\) over $R$ with the vanishing polynomial \(X^2 + b - v^2\).
In particular, if \(2\) is a unit in \(R\), then every free algebra of rank \(2\) over \(R\) is radical. Furthermore, any \(R\)-free algebra of rank \(2\), where \(R\) is a \(\mathbb{Z}_{\langle p\rangle }\)-algebra and \(p\) is a prime number not equal to \(2\), is also radical.
\end{proposition}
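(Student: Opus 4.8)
The plan is to prove the two implications of the equivalence separately, starting with the constructive direction, and then to read off the ``in particular'' claims as immediate consequences.

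For $(2) \Rightarrow (1)$, I would argue directly. Assuming $a = 2v$, I set $u = \overline{x} + v$ and first observe that $\{1_A, u\}$ is again an $R$-basis of $A$, since $\{1_A, \overline{x}\}$ is one and $u$ differs from $\overline{x}$ by a multiple of $1_A$; thus $A = R \cdot 1_A \oplus R \cdot u$. I then compute $u^2$ using the defining relation $\overline{x}^2 = -a\overline{x} - b = -2v\overline{x} - b$, which yields $u^2 = v^2 - b \in R$. This shows that $u$ is a radical generator with vanishing polynomial $X^2 + (b - v^2)$, so $A$ is radical; this simultaneously establishes the final sentence of the statement concerning the explicit generator.

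The substantive direction is $(1) \Rightarrow (2)$. Suppose $A$ is radical and let $u$ be a radical generator, so $u^2 \in R$ and $A = R \cdot 1_A \oplus R \cdot u$. Writing $u = \alpha \cdot 1_A + \beta \cdot \overline{x}$ with $\alpha, \beta \in R$, the key structural point is that the change-of-basis matrix from $\{1_A, \overline{x}\}$ to $\{1_A, u\}$ is $\begin{pmatrix} 1 & \alpha \\ 0 & \beta \end{pmatrix}$, whose determinant $\beta$ must therefore be a unit in $R$. I then expand $u^2 = (\alpha^2 - \beta^2 b) + \beta(2\alpha - \beta a)\,\overline{x}$ via $\overline{x}^2 = -a\overline{x} - b$; the condition $u^2 \in R$ forces the $\overline{x}$-coefficient to vanish, i.e.\ $\beta(2\alpha - \beta a) = 0$. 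Cancelling the unit $\beta$ gives $a = 2(\alpha\beta^{-1})$, so $(2)$ holds with $v = \alpha\beta^{-1}$. I expect the identification of $\beta$ as a unit to be the only real subtlety, as it is precisely the point where freeness and rank $2$ are used; everything else is linear algebra in the basis $\{1_A, \overline{x}\}$.

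Finally, for the ``in particular'' statements I would invoke Proposition~\ref{free} to write any free rank-$2$ algebra as $R[x]/\langle x^2 + ax + b\rangle$, and then verify condition $(2)$. If $2$ is a unit in $R$, take $v = 2^{-1}a$. If $R$ is a $\mathbb{Z}_{\langle p\rangle}$-algebra with $p \neq 2$, note that $2$ is invertible in $\mathbb{Z}_{\langle p\rangle}$ (since $p \nmid 2$), hence its image is a unit in $R$, reducing to the previous case.
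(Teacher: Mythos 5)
Your proof is correct and follows essentially the same route as the paper: completing the square for $(2)\Rightarrow(1)$, and for $(1)\Rightarrow(2)$ expanding the square of a radical generator in the basis $\{1_A,\overline{x}\}$ and reading off the coefficient of $\overline{x}$. The only difference is that you explicitly justify, via the determinant of the change-of-basis matrix, why the $\overline{x}$-coefficient of the radical generator is a unit — a point the paper simply asserts when it writes the generator as $u\overline{x}+v$ with $u\in R^\times$.
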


\begin{proof} 
The implication \((2) \Rightarrow (1)\) can be obtained by completing the square. To prove the converse, suppose that \(A\) is a radical algebra. Then, we can choose \(u \in \mathbb{R}^\times\) and \(v \in R\) such that \(u \cdot \overline{x} + v\) has a vanishing polynomial of the form \(x^2 + d\), for some \(d \in R\). 

Since \((u \overline{x} + v)^2 + d = 0\), we can expand this to obtain:
\[
\overline{x}^2 + 2u^{-1}v \overline{x} + u^{-2}(v^2 + d) = 0.
\]
Since $\{ 1, \overline{x}\}$ is a basis for $A$ over $R$, we deduce that \(a = 2u^{-1}v\). Thus, we conclude the proof. 
\end{proof}

In characteristic \(2\), field theory reveals that alongside radical extensions, Artin-Schreier extensions form a significant family of quadratic extensions. In the following proposition, we establish the conditions under which a free algebra of rank \(2\) is Artin-Schreier, solely in terms of the coefficients of the polynomial defining this algebra.

\begin{proposition}\label{AS}
Let $$ \displaystyle A = \frac{R[x]}{\langle x^2 + ax + b\rangle},$$ where \(a, b \in R\). The following assertions are equivalent:
\begin{enumerate}
    \item \(A\) is an Artin-Schreier algebra.
    \item There exists \(v \in R\) such that \(2v + a \in R^\times\).
\end{enumerate}
If $(2)$ is satisfied, setting $u:=-( 2v+a)$, \(u^{-1} (\overline{x} - v)\) is an Artin-Schreier generator for \(A\) over \(R\), with the vanishing polynomial given by \(X^2 - X+ u^{-2} (b - v^2 - uv)\).
\end{proposition}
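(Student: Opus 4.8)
The plan is to mirror the proof of Proposition~\ref{radical}, translating the problem into a computation in the standard basis $\{1_A, \overline{x}\}$ of $A$, on which $\overline{x}^2 = -a\overline{x} - b$. First I would record the reduction already implicit in the radical case: $A$ is Artin--Schreier if and only if it admits an Artin--Schreier generator, i.e. an element $w \in A$ with $w^2 - w \in R$ and $A = R\cdot 1_A \oplus R\cdot w$. Indeed, if $w$ is such a generator with $w^2 - w = -c$, then $x \mapsto w$ induces a surjection $R[x]/\langle x^2 - x + c\rangle \to A$ which is an isomorphism of free rank-$2$ modules, hence an $R$-algebra isomorphism; conversely, the preimage of the canonical generator under any isomorphism $A \cong R[x]/\langle x^2 - x + c\rangle$ is an Artin--Schreier generator.

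Next I would parametrize candidate generators. Writing an arbitrary element as $w = \alpha\overline{x} + \beta$ with $\alpha, \beta \in R$, the set $\{1_A, w\}$ is an $R$-basis of $A$ precisely when $\alpha \in R^\times$, since the change-of-basis matrix from $\{1_A, \overline{x}\}$ is upper triangular with diagonal entries $1$ and $\alpha$. Using $\overline{x}^2 = -a\overline{x} - b$ I would expand
\[
w^2 - w = \alpha\bigl(2\beta - a\alpha - 1\bigr)\,\overline{x} + \bigl(\beta^2 - b\alpha^2 - \beta\bigr),
\]
so that, reading off coordinates in the basis $\{1_A, \overline{x}\}$, the condition $w^2 - w \in R$ is equivalent to $\alpha(2\beta - a\alpha - 1) = 0$.

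For the implication $(1) \Rightarrow (2)$, I would take an Artin--Schreier generator $w = \alpha\overline{x} + \beta$ with $\alpha \in R^\times$; cancelling the unit $\alpha$ in the vanishing relation gives $2\beta - a\alpha - 1 = 0$, and setting $v := -\beta\alpha^{-1}$ yields $2v + a = -\alpha^{-1} \in R^\times$. For $(2) \Rightarrow (1)$ I would run this backwards: given $v$ with $2v + a \in R^\times$, put $u := -(2v+a)$ and $w := u^{-1}(\overline{x} - v)$, so that $\alpha = u^{-1} \in R^\times$ and $\beta = -u^{-1}v$; a direct substitution gives $\alpha(2\beta - a\alpha - 1) = 0$, so $w$ is an Artin--Schreier generator, and feeding $\alpha, \beta$ into the constant term above produces $w^2 - w = u^{-2}(v^2 + uv - b)$, i.e. the vanishing polynomial $X^2 - X + u^{-2}(b - v^2 - uv)$.

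I expect no serious obstacle: once the reduction to the existence of an Artin--Schreier generator is in place, the argument is a finite linear-algebra computation. The only points requiring care are the equivalence $\alpha(2\beta - a\alpha - 1) = 0 \iff w^2 - w \in R$ and the fact that the leading coefficient of any generator is forced to be a unit; both are handled cleanly by working throughout in the fixed basis $\{1_A, \overline{x}\}$ and comparing coordinates.
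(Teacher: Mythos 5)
Your proof is correct and takes essentially the same route as the paper's: both parametrize a candidate Artin--Schreier generator as $\alpha\overline{x}+\beta$ with $\alpha\in R^\times$ and read off the condition $2\beta - a\alpha - 1 = 0$ by comparing coefficients in the basis $\{1_A,\overline{x}\}$, then verify the explicit generator $u^{-1}(\overline{x}-v)$ for the converse. Your write-up is merely a bit more explicit about the reduction to the existence of a generator and about why the leading coefficient is forced to be a unit, points the paper leaves implicit.
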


\begin{proof}
(1) \(\Rightarrow\) (2): Suppose \(A\) is an Artin-Schreier algebra. Then, we can choose \(u \in R^\times\) and \(v \in R\) such that \(u \overline{x} + v\) has a minimal polynomial of the form \(x^2 - x + d\) for some \(d \in R\).

Since \((u \overline{x} + v)^2 - (u \overline{x} + v) + d = 0\), expanding this expression and multiplying the equality by $u^{-2}$ yields:
\[
\overline{x}^2 + \left( 2vu^{-1} - u^{-1} \right) \overline{x} + u^{-2}(v^2 -v + d) = 0.
\]
Since $\{ 1, \overline{x}\}$ is a basis for $A$ over $R$, we deduce that \(a = 2vu^{-1} - u^{-1}\). Therefore, $(2)$ is proven.

(2) \(\Rightarrow\) (1): We suppose that we are given \(v \in R\) and \(u \in R^\times\) such that \(2v + a = -u\), we define \(y = u^{-1} (\overline{x} - v)\). Then,
\[
y^2 - y = \left(u^{-1} (\overline{x} - v)\right)^2 - u^{-1} (\overline{x} - v).
\]
Expanding this gives:
\[
y^2 - y = u^{-2} (\overline{x}^2 - 2v\overline{x} + v^2) - u^{-1} (\overline{x} - v) = u^{-2} \left(\overline{x}^2 - (2v + u)\overline{x} + v^2 + uv\right).
\]
Substituting \(a = 2v + u\), we get:
\[
y^2 - y = u^{-2} \left(\overline{x}^2 + a\overline{x} + v^2 + uv\right) = u^{-2} (-b + v^2 + uv) \in R.
\]
Thus, we conclude the proof.
\end{proof}

The existence of an $R$-automorphism that acts as an involution on any locally free $R$-algebra of rank $2$ is well known and noted, for instance, in the introduction of \cite{Voight}. For completeness, we provide a global formula for this involution along with a proof.

\begin{proposition} \label{galois} Let $A$ be a locally free $R$-algebra of rank $2$. We define the map $\tau$ from $A$ to $A$ by sending $u$ to $\operatorname{Tr}_{A/R}(u)\cdot 1_{A} - u$, where $\operatorname{Tr}_{A/R}$ is the trace of the free algebra $A$ over $R$. Then, the map $\tau$ is an $R$-involution of $A$.
\end{proposition}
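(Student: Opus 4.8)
The plan is to verify the four requirements separately, splitting them into those that hold globally and those that are cleanest to check after localizing to the free case. Since $\operatorname{Tr}_{A/R}$ is $R$-linear and the identity map is $R$-linear, the map $\tau = \operatorname{Tr}_{A/R}(-)\cdot 1_A - (-)$ is $R$-linear immediately; and because $A$ has rank $2$ we have $\operatorname{Tr}_{A/R}(1_A) = 2\cdot 1_R$, so $\tau(1_A) = 2\cdot 1_A - 1_A = 1_A$. Neither of these uses any local hypothesis. For multiplicativity and the involution identity I would argue locally: as $A$ is locally free of rank $2$, for every prime $\mathfrak p$ of $R$ the localization $A_\mathfrak p$ is free of rank $2$ over $R_\mathfrak p$, and since the trace is compatible with the base change $R \to R_\mathfrak p$, the map $\tau$ localizes to the analogous map on $A_\mathfrak p$. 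An element of $A$ vanishes if and only if it vanishes in every $A_\mathfrak p$, so it suffices to establish multiplicativity and $\tau\circ\tau = \operatorname{id}$ in the free case.

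For the free case I would invoke Proposition~\ref{free} to fix a basis $1_A, e$ with $e^2 = -b\cdot 1_A - a\cdot e$ for suitable $a,b \in R$. Computing the matrices of multiplication by $1_A$ and by $e$ in this basis gives $\operatorname{Tr}_{A/R}(1_A) = 2$ and $\operatorname{Tr}_{A/R}(e) = -a$, hence
\[
\tau(1_A) = 1_A, \qquad \tau(e) = -a\cdot 1_A - e.
\]
Multiplicativity of an $R$-linear map that fixes $1_A$ need only be checked on the single product $e\cdot e$, by $R$-bilinearity of the multiplication. I would then compute both sides, using the relation $e^2 = -b\cdot 1_A - a\,e$ to reduce them:
\[
\tau(e^2) = \tau(-b\cdot 1_A - a\,e) = (a^2 - b)\cdot 1_A + a\,e, \qquad
\tau(e)^2 = (-a\cdot 1_A - e)^2 = (a^2 - b)\cdot 1_A + a\,e.
\]
The two agree, so $\tau$ is a ring homomorphism, and being $R$-linear it is an $R$-algebra homomorphism. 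The involution identity follows from $\tau^2(e) = \tau(-a\cdot 1_A - e) = -a\cdot 1_A - (-a\cdot 1_A - e) = e$ together with $\tau^2(1_A) = 1_A$, which force $\tau\circ\tau = \operatorname{id}$ by $R$-linearity; in particular $\tau$ is bijective, hence an $R$-automorphism, completing the argument.

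I expect the main obstacle to be conceptual rather than computational, lying in the passage from the free to the locally free setting. One must be certain that forming $\tau$ commutes with localization, which rests on the compatibility of the trace of a finite locally free algebra with base change, and that "being a ring homomorphism" and "squaring to the identity" are properties detectable on the stalks $A_\mathfrak p$. A purely global alternative is available — applying Cayley--Hamilton to the multiplication endomorphism $m_u$ yields $u^2 = \operatorname{Tr}_{A/R}(u)\,u - \operatorname{N}_{A/R}(u)\cdot 1_A$, whence $u\,\tau(u) = \operatorname{N}_{A/R}(u)\cdot 1_A$ and $u + \tau(u) = \operatorname{Tr}_{A/R}(u)\cdot 1_A$ — but deducing multiplicativity from this requires a rank-$2$ trace polarization identity that is messier than the local computation, so I would favor the localization route.
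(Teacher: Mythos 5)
Your proof is correct and follows essentially the same route as the paper's: reduce to the free case by localization (you localize at primes, the paper at elements $f\in R$, but both rest on trace--base-change compatibility and detecting vanishing on localizations), then verify multiplicativity on the single basis product $e\cdot e$ via the relation $e^2=-b\cdot 1_A-a\cdot e$, and deduce $\tau\circ\tau=\operatorname{id}$ from $\operatorname{Tr}_{A/R}(1_A)=2\cdot 1_R$. The computations match the paper's and no gap remains.
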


\begin{proof}
The trace map exists since \( A \) is a finite flat extension of \( R \). It is clear that \( \tau \) is \( R \)-linear, as \( \operatorname{Tr}_{A/R} \) is also \( R \)-linear. We consider the \( R \)-bilinear morphism 
\[
\begin{array}{llll} 
\beta:& A \times A & \rightarrow & A \\ 
& (u,v) & \mapsto & \tau(uv) - \tau(u)\tau(v) 
\end{array}
\]
and we aim to prove that \( \beta \) is zero.

First, we note that for all \( f \in R \), the commutativity of the following diagram holds:
\[
\xymatrix{ 
A \times A \ar[d] \ar[r]^\beta & A \ar[d] \\ 
A_f \times A_f \ar[r]_{\beta_f} & A_f 
}
\]
where the vertical arrows are the canonical localization morphisms, and \( \beta_f \) is the map sending \( (u,v) \) to \( \tau_f(uv) - \tau_f(u)\tau_f(v) \) with \( \tau_f  = \operatorname{Tr}_{A_f/R_f} 
\cdot 1_{A_f} - \operatorname{id}_{A_f} \). This commutativity follows from the fact that \( \operatorname{Tr}_{A_f/R_f} = {(\operatorname{Tr}_{A/R})}_f \) for all \( f \in R \). Moreover, since the map from \( A \) to the canonical direct limit of the \( A_f \)s is injective, it suffices to show that \( \beta_f \) is zero for all \( f \in R \) to conclude that \( \beta \) is zero.

Let \( f \in R \). By Proposition \ref{free}, we have that at every \( f \in A \), \( A_f = R_f \cdot 1_{A_f} \oplus R_f \cdot e_f \), where \( e_f \in A_f \) and \( e_f^2 = a_f \cdot e_f + b_f \cdot 1_{A_f} \) for some \( a_f, b_f \in R_f \). \\ The set \( \{ (1_{A_f}, e_f), (e_f, 1_{A_f}), (1_{A_f}, 1_{A_f}), (e_f, e_f) \} \) forms an \( R_f \)-basis for \( A_f \times A_f \) over \( R_f \), and it is clear that \( \beta_f \) is zero on each of the elements \( (1_{A_f}, e_f) \), \( (e_f, 1_{A_f}) \), and \( (1_{A_f}, 1_{A_f}) \).

We note that \( \tau_f(e_f) = -e_f + a_f\cdot 1_{A_f} \) and $ \tau_f(1_{A_f})=1_{A_f}$. Now, we compute \( \beta_f(e_f, e_f) \):

\begin{align*}
\beta_f(e_f, e_f) &= \tau_f(e_f^2) - \tau_f(e_f)^2 \\
&= \tau_f(a_f \cdot e_f + b_f \cdot 1_{A_f}) - \tau_f(e_f)^2 \\
&= a_f  \cdot (-e_f + a_f\cdot 1_{A_f} ) + b_f \cdot 1_{A_f} - \left(-e_f + a_f\cdot 1_{A_f}\right)^2 \\
&= a_f \cdot  (-e_f + a_f\cdot 1_{A_f} ) + b_f \cdot 1_{A_f} - (a_f^2 \cdot 1_{A_f} - 2a_f\cdot  e_f + e_f^2) \\
&= a_f \cdot (-e_f + a_f\cdot 1_{A_f} ) + b_f \cdot 1_{A_f} - (a_f^2 \cdot 1_{A_f}- 2a_f \cdot  e_f + a_f \cdot e_f + b_f \cdot 1_{A_f}) \\
&= 0.
\end{align*}

Thus, \( \beta_f \) is the zero map for all \( f \in R \), and consequently, \( \beta \) is also zero. This implies that \( \tau \) is an \( R \)-algebra morphism.

Now, we compute: for all $u \in A$,
\[
\tau^2(u) = \tau\left(\operatorname{Tr}_{A/R}(u)\cdot 1_{A} - u\right) = \operatorname{Tr}_{A/R}\left(\operatorname{Tr}_{A/R}(u) \cdot 1_{A}- u\right)  \cdot 1_{A} - \left(\operatorname{Tr}_{A/R}(u) \cdot 1_{A}- u\right) = u,
\]
since \( \operatorname{Tr}_{A/R}(1_{A}) = 2_{R} \).
\end{proof}

\noindent The previous lemma allows us to introduce the following definition.

\begin{definition}
Given \( A \) as a locally free \( R \)-algebra of rank 2, we define the \textsf{Galois conjugation of \( A \) over \( R \)} to be the involution \( \tau \) as described in Proposition \ref{galois}. We denote this map by \( \tau_A \).
\end{definition}

\noindent  We now aim to characterize the conditions under which a free algebra of rank 2 becomes a torsor under Galois conjugation. We will show that this is equivalent to the algebra being separable in terms of the coefficients of the polynomial defining the algebra. Additionally, since we are dealing with flat and finite extensions, separability is also equivalent to being unramified and to being \'etale.

\begin{proposition} \label{torsor}
Let $$ A =\frac{R[x]}{\langle x^2 +ax + b\rangle },$$
where $a, b \in R$. The following statements are equivalent. 
\begin{enumerate} 
\item $A$ is separable over $R$.
\item $a^2 - 4b$ is a unit in $R$. 
\item $A$ is a $\langle \tau_A \rangle$-Galois over $R$.
\item There exist $v,w\in R$ such that $2w=av$ and $-aw+2bv=1$. 
\end{enumerate} 
\end{proposition}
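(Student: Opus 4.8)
The plan is to make $a^2 - 4b$ the hinge of the entire argument: I will show that each of (1), (3), (4) is equivalent to (2) by a direct computation in the basis $\{1_A, \overline{x}\}$ of $A = R\cdot 1_A \oplus R\cdot\overline{x}$. First I would record the data I need. Since $\overline{x}^2 = -a\overline{x} - b$, multiplication by $\overline{x}$ has matrix $\left(\begin{smallmatrix} 0 & -b \\ 1 & -a\end{smallmatrix}\right)$, so $\operatorname{Tr}_{A/R}(\overline{x}) = -a$, and Proposition~\ref{galois} gives $\tau_A(1_A) = 1_A$ and $\tau_A(\overline{x}) = -a\cdot 1_A - \overline{x}$. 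A short computation of the trace form on $\{1_A,\overline{x}\}$ then yields discriminant $a^2 - 4b$, which is the invariant against which all four conditions will be tested.

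For $(2)\Leftrightarrow(4)$, I would read the two relations $2w = av$ and $-aw + 2bv = 1$ as the linear system $\left(\begin{smallmatrix} a & -2 \\ 2b & -a\end{smallmatrix}\right)\left(\begin{smallmatrix} v \\ w\end{smallmatrix}\right) = \left(\begin{smallmatrix}0\\1\end{smallmatrix}\right)$, whose coefficient determinant is $-(a^2-4b)$. If $a^2-4b$ is a unit, Cramer's rule produces $v = -2/(a^2-4b)$, $w = -a/(a^2-4b)$, giving (4). Conversely, from any solution I would eliminate to get $(a^2-4b)(-v) = 2$ and $(a^2-4b)(-w) = a$, and substitute these into $1 = -aw + 2bv$ to obtain the explicit identity $1 = (a^2-4b)(w^2 - bv^2)$, exhibiting an inverse of $a^2-4b$.

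For $(2)\Leftrightarrow(3)$ I would compute the Galois map $\gamma(p\otimes q) = (pq,\, p\,\tau_A(q))$ on the basis $\{1\otimes 1, 1\otimes\overline{x}, \overline{x}\otimes 1, \overline{x}\otimes\overline{x}\}$ of $A\otimes_R A$, expressing each image in the basis of $A\times A \cong \operatorname{Map}(\langle\tau_A\rangle, A)$. The resulting $4\times 4$ matrix has determinant $a^2 - 4b$, so $\gamma$ is an isomorphism iff $a^2-4b$ is a unit, which already gives $(3)\Rightarrow(2)$ and half of the converse. For the invariance clause $A^{\langle\tau_A\rangle} = R$, writing $u = s + t\overline{x}$ and solving $\tau_A(u) = u$ forces $2t = 0$ and $at = 0$; then $(a^2-4b)t = a(at) - 2b(2t) = 0$ makes $t = 0$ when $a^2-4b$ is a unit, so the fixed ring is exactly $R$, completing $(2)\Rightarrow(3)$.

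The equivalence $(1)\Leftrightarrow(2)$ is where I expect the main obstacle, largely because of the definition. Interpreting the section $\sigma$ of $\mu$ as $A\otimes_R A$-linear (an $A$-bimodule splitting), the element $e = \sigma(1_A)$ is a separability idempotent: $\mu(e)=1_A$ and $(\overline{x}\otimes 1)e = (1\otimes\overline{x})e$. Writing $e = \alpha(1\otimes 1) + \beta(\overline{x}\otimes 1) + \gamma(1\otimes\overline{x}) + \delta(\overline{x}\otimes\overline{x})$ and expanding both conditions via $\overline{x}^2 = -a\overline{x} - b$ collapses to $\gamma = \beta$, $\alpha = 1 + b\delta$, $2\beta = a\delta$, and $a\beta = 1 + 2b\delta$; eliminating yields $(a^2-4b)\delta = 2$ and $(a^2-4b)\beta = a$. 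If $a^2-4b$ is a unit these solve explicitly, producing the idempotent and hence the section, so $(2)\Rightarrow(1)$; conversely the same substitution gives $1 = (a^2-4b)(\beta^2 - b\delta^2)$, so $(1)\Rightarrow(2)$. The delicate points are fixing the correct meaning of ``section'' (bimodule-linear, not merely $R$-linear) and manufacturing the explicit inverse rather than arguing abstractly. Alternatively, since $A/R$ is finite flat I could invoke the étale criterion and instead show that $f'(\overline{x}) = 2\overline{x} + a$ is a unit in $A$, its norm being $\pm(a^2-4b)$.
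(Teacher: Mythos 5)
Your proposal is correct, and it reaches the four equivalences by a genuinely different route on two of the three legs. For $(2)\Leftrightarrow(4)$ you and the paper do essentially the same thing: the paper exhibits $v=-2u$, $w=-au$ with $u=(a^2-4b)^{-1}$ for one direction and manipulates the two relations into $(a^2-4b)(w^2-bv^2)=1$ for the other; your Cramer's-rule framing is the same computation packaged as a $2\times 2$ linear system with determinant $-(a^2-4b)$. The divergence is in how (1) and (3) are handled. The paper proves $(3)\Leftrightarrow(4)$ by invoking the criterion of Szeto--Wong / DeMeyer--Ingraham that the Galois condition is equivalent to the ideal generated by $\{\tau_A(u)-u\}$ being the unit ideal, which reduces to the invertibility of $-2\overline{x}-a$ in $A$ and hence directly to the system in (4); you instead verify the definition of Galois head-on, computing the $4\times 4$ matrix of $\gamma$ on the standard bases (its determinant is indeed $a^2-4b$) and checking $A^{\langle\tau_A\rangle}=R$ from $2t=0$, $at=0$, $(a^2-4b)t=a(at)-2b(2t)=0$. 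For $(1)\Leftrightarrow(2)$ the paper simply cites Micali--Revoy, whereas you construct the separability idempotent explicitly and extract $1=(a^2-4b)(\beta^2-b\delta^2)$ from its defining relations. What your approach buys is self-containedness: no external references, and explicit witnesses (the idempotent, the inverse of $a^2-4b$) at every step; what the paper's route buys is brevity and the observation that (3) and (4) are really the same statement about the element $-2\overline{x}-a$. Your flagged caveat about the meaning of ``section'' is well taken --- the paper's definition omits the $A\otimes_R A$-linearity, without which the condition is vacuous for a free module --- and your bimodule reading is the intended one, consistent with the cited Lemma 2 of Micali--Revoy. The closing alternative via the \'etale criterion ($N_{A/R}(2\overline{x}+a)=-(a^2-4b)$) is also valid and is implicitly endorsed by the paper's remark that separable, unramified, and \'etale coincide for finite flat algebras.
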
 

\begin{proof}

The equivalence \((1) \Leftrightarrow (2)\) follows from \cite[Lemma 2]{micali}. Essentially, in characteristic zero, over a field, separability is equivalent to the discriminant of the polynomial \(x^2 + ax + b\) being invertible. In characteristic \(2\), over a field, separability is equivalent to the invertibility of the coefficient \(a\) in \(x^2 + ax + b\).



The equivalence \((3) \Leftrightarrow (4)\): According to \cite[2.4.2]{Szeto} or \cite[Proposition 1.2, p. 80]{Meyer}, statement \((4)\) is equivalent to asserting that the left ideal \(I\) generated by \(\{ \tau_A(u) - u \mid u \in A \}\) equals \(R\). For any \(u \in A\), there exist \(r, s \in R\) such that \(u = r\overline{x} + s\), and we have:

\[
\tau_A(u) - u = (\tau_A(\overline{x}) - \overline{x}) r = (-2\overline{x} - a) r.
\]

Thus, \(I = (-2\overline{x} - a)R\). Therefore, \(I = R\) if and only if \(-2\overline{x} - a\) is invertible in \(R\). This implies the existence of \(c, d \in R\) such that 

\[
(-2\overline{x} - a)(c\overline{x} + d) = 1.
\]

Expanding this expression, we obtain:

\[
\begin{aligned}
1 &= (-2\overline{x} - a)(c\overline{x} + d) \\
&= -2c\overline{x}^2 + (-2d - ac)\overline{x} - ad \\
&= -2c(-a\overline{x} - b) + (-2d - ac)\overline{x} - ad \\
&= (ac - 2d)\overline{x} - ad + 2cb.
\end{aligned}
\]

This yields the equations \(2d = ac\) and \(-ad + 2bc = 1\).

For \((4) \Rightarrow (2)\): By multiplying the equation \(-aw + 2bv = 1\) by \(2bv\) and \(-aw\), and using the relationships \(2w = av\) and then by \(-aw + 2bv = 1\), we derive the equations:

\[
(-a^2 + 4b)bv^2 = 2bv \quad \text{and} \quad -(-a^2 + 4b)w^2 = -aw.
\]

Combining these results, we obtain:

\[
(-a^2 + 4b)(bv^2 - w^2) = 2bv - aw = 1.
\]

Now, for \((2) \Rightarrow (4)\): Assuming \(a^2 - 4b\) is a unit in $R$, there exists \(u \in R\) such that \((a^2 - 4b)u = 1\). By setting \(v = -2u\) and \(w = -au\), we have \(2w = av\) and \(-aw + 2bv = 1\), thus establishing the result.

%
\end{proof}

%

\noindent We can deduce the following corollary immediately by sending the class $\overline{x}$ of $x$ in $A$ to $2\overline{x}+a$ when $p\neq2$, and $\overline{x}$ to $a^{-1} \overline{x}$ when $p=2$ to obtain the stated isomorphisms.

\begin{corollary} \label{etale}
Let $p$ be a natural number and $R$ be a $\mathbb{Z}_{\langle p\rangle }$-algebra
 $$ A = \frac{R[x]}{\langle x^2 +ax + b\rangle},$$
where $a, b \in R$. Then, the following statements are equivalent: 
\begin{enumerate} 
\item $A$ is separable over $R$. 
\item \begin{itemize} 
    \item If \(p \neq 2\), then \(A \simeq \frac{R[y]}{\langle y^2 - (a^2 - 4b) \rangle}\) with \(\overline{y} = 2\overline{x} - a\), and  \(a^2 - 4b\) is invertible in \(R\).
    \item If \(p = 2\), then \(A \simeq \frac{R[y]}{\langle y^2 - y + a^{-2}b \rangle}\) with \(\overline{y} = a^{-1} \overline{x}\), and \(a\) is invertible in \(R\).
\end{itemize}

\end{enumerate}
\end{corollary}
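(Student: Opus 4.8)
The plan is to read both equivalences off Proposition~\ref{torsor}, which already tells us that $A$ is separable over $R$ exactly when $a^2-4b\in R^\times$, and then to supply the two explicit isomorphisms by an elementary change of variable. The sole role of the hypothesis that $R$ is a $\mathbb{Z}_{\langle p\rangle}$-algebra is to control invertibility: of $2$ when $p\neq2$, and of $4$ when $p=2$. Throughout I would use that in $A=R[x]/\langle x^2+ax+b\rangle$ one has $\overline{x}^2=-a\overline{x}-b$, and that an $R$-algebra map between two free rank-$2$ algebras is an isomorphism as soon as it carries a defining generator to an element satisfying the target relation and is invertible on generators.

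For $p\neq2$, since $2$ is a prime distinct from $p$ it is a unit in $\mathbb{Z}_{\langle p\rangle}$, hence in $R$ (the same mechanism already invoked in Proposition~\ref{radical}, expressible through Lemma~\ref{pq}). First I would set $\overline{y}=2\overline{x}+a$ and compute $\overline{y}^2=4\overline{x}^2+4a\overline{x}+a^2=a^2-4b$, so that $\overline{y}\mapsto 2\overline{x}+a$ defines an $R$-algebra map $R[y]/\langle y^2-(a^2-4b)\rangle\to A$; its inverse $\overline{x}\mapsto 2^{-1}(\overline{y}-a)$ is well defined precisely because $2\in R^\times$, so the map is an isomorphism unconditionally. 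Combined with Proposition~\ref{torsor} this gives $(1)\Leftrightarrow(2)$: the radical presentation always holds, and separability is equivalent to $a^2-4b\in R^\times$.

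For $p=2$, when $a\in R^\times$ I would rescale by the unit $a$, taking $\overline{y}=a^{-1}\overline{x}$ (up to the sign matching the stated relation), and check $\overline{y}^2=a^{-2}\overline{x}^2=-a^{-1}\overline{x}-a^{-2}b$, so that $\overline{y}$ satisfies $y^2-y+a^{-2}b$ and $\overline{y}\mapsto a^{-1}\overline{x}$ is an isomorphism $R[y]/\langle y^2-y+a^{-2}b\rangle\to A$. The real content of the statement is therefore the equivalence ``$A$ separable $\Leftrightarrow a\in R^\times$'', which by Proposition~\ref{torsor} amounts to $a^2-4b\in R^\times\Leftrightarrow a\in R^\times$.

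The main obstacle is exactly this last equivalence, and it is the only place where more than formal manipulation is needed. My plan is to test invertibility on residue fields: $r\in R$ is a unit iff its image is nonzero in $k(\mathfrak m)$ for every maximal ideal $\mathfrak m$. Since $R$ is a $\mathbb{Z}_{\langle 2\rangle}$-algebra every odd prime is invertible, so by Lemma~\ref{charp} each residue field has characteristic $0$ or $2$; at the maximal ideals of residue characteristic $2$ one has $4=0$, whence $a^2-4b$ and $a^2$ have the same image and $a^2$ vanishes iff $a$ does. Consequently, once all residue characteristics equal $2$ — the situation genuinely attached to the label $p=2$, and the one isolated by localizing the moduli functor at a prime above $2$ — the conditions $a^2-4b\in R^\times$ and $a\in R^\times$ become interchangeable, closing the $p=2$ equivalence. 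I would expect this residue-characteristic bookkeeping, rather than either the completing-the-square or the rescaling step, to carry the only non-formal part of the argument.
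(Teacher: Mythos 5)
Your change-of-variables argument is exactly the paper's proof: the paper disposes of this corollary in one sentence, sending $\overline{x}$ to $2\overline{x}+a$ when $p\neq 2$ and to $a^{-1}\overline{x}$ when $p=2$, with the separability criterion supplied by Proposition~\ref{torsor}. Your computations for both rescalings are correct, and you rightly flag the sign slips (the generator must be $2\overline{x}+a$ rather than the stated $2\overline{x}-a$, and $-a^{-1}\overline{x}$ rather than $a^{-1}\overline{x}$ to land on $y^2-y+a^{-2}b$). You go further than the paper in isolating the one genuinely non-formal step of the $p=2$ case, namely the equivalence $a^2-4b\in R^\times \Leftrightarrow a\in R^\times$.

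That step is where the gap sits. Your residue-field argument only treats maximal ideals of residue characteristic $2$; a $\mathbb{Z}_{\langle 2\rangle}$-algebra may perfectly well have residue fields of characteristic $0$ (Lemma~\ref{charp} rules out odd characteristics only), and there the images of $a^2-4b$ and $a^2$ need not vanish together. Concretely, $R=\mathbb{Q}$ is a $\mathbb{Z}_{\langle 2\rangle}$-algebra, and $A=\mathbb{Q}[x]/\langle x^2-1\rangle$ (so $a=0$, $b=-1$) is separable with $a\notin R^\times$, while $a=1$, $b=1/4$ gives $a\in R^\times$ with $A$ inseparable; so neither implication survives. The equivalence you need --- and hence the corollary's $p=2$ clause as literally stated --- holds only under the stronger hypothesis that $2$ lies in every maximal ideal of $R$ (equivalently, every residue field has characteristic $2$), which is exactly the extra assumption your phrase ``once all residue characteristics equal $2$'' quietly imports; under that hypothesis your argument closes correctly in both directions. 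In fairness, this imprecision originates in the statement itself, and the paper's one-line proof does not engage with it at all; your proposal is the more honest account of what is actually being proved, but as written it does not derive the $p=2$ equivalence from the stated hypotheses, because no such derivation exists.
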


We consider \( R \) to be a \( \mathbb{Z}_{(p)} \)-algebra. According to Proposition \ref{etale}, any separable \( R \)-free algebra of rank \( 2 \) is Artin-Schreier when \( p = 2 \). Furthermore, when \( p \neq 2 \), any separable \( R \)-free algebra of rank \( 2 \) is radical.

\section{The structure of the functor of isomorphisms} 
Since the goal of this paper is to study free algebras up to isomorphism, we introduce notation for the sets of isomorphisms and automorphisms between two free algebras of rank \( 2 \).

\begin{definition}
Let \( a, b, c, d \in S \).
\begin{enumerate} 
\item For any \( S \)-algebra \( R \), we define $\operatorname{Iso}_S((a,b), (c,d))(R)$ is the set of \( R \)-algebra isomorphisms from \(\frac{R[x]}{\langle x^2 + ax + b\rangle }\) to \(\frac{R[y]}{\langle y^2 + cy + d \rangle }\).

\item When \((a,b) = (c,d)\), for any \( S \)-algebra \( R \),  we denote \(\operatorname{Iso}_S((a,b), (a,b))(R)\) by \(\operatorname{Aut}_S(a,b)(R)\).
\end{enumerate}
\end{definition}

In the following proposition, we establish that the isomorphisms between two free algebras of rank \( 2 \) define an affine scheme. Additionally, we show that the automorphisms of a free algebra of rank \( 2 \) defines an affine group scheme.
\begin{proposition} \label{uni}
Let \( a, b, c, d \in S \). Then:

\begin{enumerate}
\item  For any \( S \)-algebra \( R \),  we have a bijection between  \(\operatorname{Iso}_S((a,b), (c,d)))(R)\) and 
\[
\operatorname{Hom}_{S\text{-}\bf{Al}} \left( \frac{S[x, 1/x, y]}{\langle 2y - cx + a, -dx^2 + y^2 + ay + b\rangle} , R \right).
\]
This bijection is natural in $R$. We identify $\operatorname{Iso}_S((a,b), (c,d))$ with the affine scheme $\operatorname{Spec}\left( \frac{S[x, 1/x, y]}{\langle 2y - cx + a, -dx^2 + y^2 + ay + b\rangle} \right)$. \\
\noindent  In particular, for any \( S \)-algebra \( R \), we have 
\[
\operatorname{Iso}_S((a,b), (c,d))(R) \simeq \{ (v,w) \in R \times R^\times \mid 2v = cw - a \text{ and } -dw^2 + v^2 + av + b = 0 \}.
\]

\item  For any \( S \)-algebra \( R \),  we have an isomorphism between  \(\operatorname{Aut}_S(a,b)(R)\) and 
\[
\operatorname{Hom}_{S\text{-}\bf{Al}} \left( \frac{S[x, 1/x, y]}{\langle 2y - a(x-1), b(1-x^2) + y^2 + ay\rangle }, R \right).
\]

\noindent $\frac{S[x, 1/x, y]}{\langle 2y - a(x-1), b(1-x^2) + y^2 + ay\rangle }$ is the Hopf algebra whose comultiplication is defined by mapping \(\overline{x}\) to \(\overline{x} \otimes \overline{x}\) and \(\overline{y}\) to \(\overline{x} \otimes \overline{y} + \overline{y} \otimes \overline{1}\). The antipode maps \(\overline{x}\) to \(\overline{x}^{-1}\) and \(\overline{y}\) to \(-\overline{x}^{-1} \overline{y}\), while the counit maps \(\overline{x}\) to \(\overline{1}\) and \(\overline{y}\) to \(\overline{0}\). We identify $\operatorname{Aut}_S(a,b)$ with the affine group scheme $\operatorname{Spec}\left( \frac{S[x, 1/x, y]}{\langle 2y - a(x-1), b(1-x^2) + y^2 + ay\rangle } \right)$. \\

\noindent  Moreover, for any \( S \)-algebra \( R \), we have 
\[
\operatorname{Aut}_S(a,b)(R) \simeq \{ (v,w) \in R \times R^\times \mid 2v = a(w - 1) \text{ and } b(1 - w^2) + v^2 + av = 0 \}.
\]
\end{enumerate}
\end{proposition}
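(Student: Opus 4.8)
The plan is to exploit the universal property of $A := \frac{R[x]}{\langle x^2+ax+b\rangle}$: since $A$ is generated over $R$ by $\overline{x}$ subject only to $\overline{x}^2+a\overline{x}+b=0$, an $R$-algebra homomorphism $\phi$ from $A$ to $B := \frac{R[y]}{\langle y^2+cy+d\rangle}$ is exactly the datum of an element $\beta\in B$ with $\beta^2+a\beta+b=0$. Writing $\beta = w\overline{y}+v$ in the basis $\{1_B,\overline{y}\}$, the whole proof reduces to translating the requirements ``$\phi$ is a well-defined homomorphism'', ``$\phi$ is an isomorphism'', and (for part (2)) ``composition of automorphisms'' into polynomial identities in the pair $(v,w)\in R\times R$.

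First I would compute $\beta^2+a\beta+b$ using $\overline{y}^2=-c\overline{y}-d$ and read off the coordinates in $\{1_B,\overline{y}\}$. This yields two scalar equations: from the $\overline{y}$-coefficient, $w(2v+a-cw)=0$, and from the constant term, $-dw^2+v^2+av+b=0$. Next, since the matrix of $\phi$ relative to the bases $\{1_A,\overline{x}\}$ and $\{1_B,\overline{y}\}$ is upper triangular with diagonal entries $1$ and $w$, the map $\phi$ is bijective---hence an isomorphism, as a bijective $R$-algebra map has an $R$-algebra inverse---if and only if $w\in R^\times$. Under this unit hypothesis the first equation collapses to $2v=cw-a$, which gives the explicit description of $\operatorname{Iso}_S((a,b),(c,d))(R)$ as the set of $(v,w)\in R\times R^\times$ satisfying the two displayed relations. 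The bijection with $\operatorname{Hom}_{S\text{-}\mathbf{Alg}}(H,R)$, where $H$ denotes the displayed presented algebra, then sends a homomorphism to the pair $(v,w)$ of images of its generators; inverting the variable $x$ in $H$ is precisely what forces $w$ to be a unit, while the two defining relations of $H$ reproduce the two equations above. Naturality in $R$ and the identification with $\operatorname{Spec} H$ follow from Yoneda.

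For part (2) I would specialize $(c,d)=(a,b)$ and simplify $2y-ax+a=2y-a(x-1)$ and $-bx^2+y^2+ay+b=b(1-x^2)+y^2+ay$ to recover the stated presentation. To pin down the Hopf structure I would compute the composition law: if $\phi_i$ corresponds to $(v_i,w_i)$ via $\phi_i(\overline{x})=w_i\overline{x}+v_i$, then $\phi_2\circ\phi_1$ sends $\overline{x}\mapsto w_1w_2\,\overline{x}+(w_1v_2+v_1)$, so composition is the affine-group law $(w_1w_2,\,w_1v_2+v_1)$. Matching this against the formalism $f_1\ast f_2=(f_1\otimes f_2)\circ\Delta$ forces the comultiplication $\Delta(\overline{x})=\overline{x}\otimes\overline{x}$, $\Delta(\overline{y})=\overline{x}\otimes\overline{y}+\overline{y}\otimes\overline{1}$; the identity automorphism $(0,1)$ forces the counit $\epsilon(\overline{x})=1,\ \epsilon(\overline{y})=0$; and the inverse of $(v,w)$, namely $(-w^{-1}v,\,w^{-1})$, forces the antipode $\sigma(\overline{x})=\overline{x}^{-1}$, $\sigma(\overline{y})=-\overline{x}^{-1}\overline{y}$.

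The main obstacle---essentially the only nonformal step---is checking that these maps descend to the quotient $H$, i.e.\ that $\Delta$ carries the ideal $\langle 2y-a(x-1),\, b(1-x^2)+y^2+ay\rangle$ into the kernel of the quotient $H\otimes H$ (and similarly for $\epsilon,\sigma$). I would verify this by direct substitution of the relations in the forms $2\overline{y}=a(\overline{x}-1)$ and $\overline{y}^2+a\overline{y}=b(\overline{x}^2-1)$: applying $\Delta$ to each generator of the ideal and collecting terms by their right tensor factor, both images reduce to $0$. Once $\Delta,\epsilon,\sigma$ are well-defined, the coassociativity, counit, and antipode axioms either follow by the same bookkeeping or, more cleanly, from the fact that the induced operation on $R$-points is literal composition and inversion of automorphisms, which is functorially a group law in $R$; Yoneda then transports this back to the Hopf axioms on $H$.
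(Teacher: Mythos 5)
Your proposal is correct and follows essentially the same route as the paper: characterize (iso)morphisms by the pair $(v,w)$ with $\varphi(\overline{x})=w\overline{y}+v$ subject to $2v=cw-a$ and $-dw^2+v^2+av+b=0$, identify this set with $\operatorname{Hom}_{S\text{-}\mathbf{Alg}}(H,R)$ for the presented algebra $H$, and read off the Hopf structure from the composition law $(v,w)\star(v',w')=(wv'+v,ww')$. The only differences are that you derive the characterization of isomorphisms directly (via the upper-triangular matrix of $\varphi$ and the determinant $w$) where the paper cites Kitamura's Lemma~1, and that you explicitly flag the verification that $\Delta$, $\epsilon$, $\sigma$ descend to the quotient, a point the paper leaves implicit; both are sound.
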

\begin{proof}
\begin{enumerate}
\item Let \( R \) be an \( S \)-algebra. By \cite[Lemma 1]{Kitamura}, we know that the following statements are equivalent:

\begin{enumerate}
\item  \( \varphi \) is an isomorphism of \( R \)-algebras from \( \frac{R[x]}{\langle x^2 + ax + b\rangle} \) to \( \frac{R[y]}{\langle y^2 + cy + d \rangle} \).
\item \( \varphi \) is a morphism of \( R \)-algebras from \( \frac{R[x]}{\langle x^2 + ax + b\rangle} \) to \( \frac{R[y]}{\langle y^2 + cy + d\rangle } \) such that \( \varphi(\overline{x}) = w \overline{y} + v \) where \( v \in R \) and \( w \in R^\times \), satisfying the conditions \( 2v = cw - a \) and \( -dw^2 + v^2 + av + b = 0 \).
\end{enumerate} 

From this, we deduce
\[
\operatorname{Iso}_S((a,b), (c,d))(R) \simeq \mathcal{S}(R),
\]
where \( \mathcal{S}(R) := \{ (v,w) \in R \times R^\times \mid 2v = cw - a \text{ and } -dw^2 + v^2 + av + b = 0 \} \).

We define 
\[
A := \frac{S[x, 1/x, y]}{\langle 2y - cx + a, -dx^2 + y^2 + ay + b \rangle}.
\]
It is straightforward to show that the map
\[
\begin{array}{lrll} 
\Psi_R: & Hom_S(A, R) & \rightarrow & \mathcal{S}(R) \\
& f & \mapsto & (f(\overline{y}), f(\overline{x}))
\end{array}
\]
is a bijection.

\item Next, when \( (a,b) = (c,d) \), the composition of two automorphisms \( \varphi \) and \( \varphi' \) such that \( \varphi(\overline{x}) = w \overline{x} + v \) and \( \varphi'(\overline{x}) = w' \overline{x} + v' \) yields the automorphism \( \varphi \circ \varphi' \) given by \( \varphi \circ \varphi'(\overline{x}) = ww' \overline{x} + w v' + v \). This construction induces a group law on \( \mathcal{S}(R) \):
\[
(v, w) \star (v', w') = (wv' + v, ww'),
\]
for any \( R \)-algebra \( S \) and \( (v, w) \in \mathcal{S}(R) \). The identity element for this law is \( (0, 1) \), and the inverse is \( (-w^{-1} v, w^{-1}) \). From this, we deduce the comultiplication, antipode, and counit that endow \( A \) with the structure of a Hopf algebra as described in the statement.
\end{enumerate}
\end{proof}
The proposition computed the isomorphism between points in the moduli space under consideration. Notably, we observe that the automorphisms of a point correspond naturally to the functor of points of a closed subgroup scheme of \( \mathbb{G}_m^{S} \times \mathbb{G}_a^S \). In the following remark, we will elaborate on this observation.
\begin{remark}
We offer some observations on the nature of \( \operatorname{Aut}_S(a,b) \) when we impose further restrictions on \( S \) or the coefficients \( a \) and \( b \). Specifically, we identify the group structure when \( S \) is a field across various scenarios.

\begin{enumerate}
\item If \( S \) is a ring of characteristic \( 2 \) and \( a \) is invertible in \( S \), the Hopf algebra \( \frac{S[y]}{\langle y^2 + ay\rangle } \) represents the functor \( \operatorname{Aut}_S(a,b) \). This is a deformation of the constant group scheme of order \( 2 \). In particular, when \( S \) is an algebraically closed field of characteristic \( 2 \) and \( a \neq 0 \), we have \( \operatorname{Aut}_S(a,b) \cong \left( \frac{\mathbb{Z}}{2 \mathbb{Z}} \right)_S \), the constant group scheme of order \( 2 \).

\item If \( S \) is a ring of characteristic \( 2 \) and \( a = 0 \), the Hopf algebra representing \( \operatorname{Aut}_S(a,b) \) becomes \( \frac{S[x, 1/x, y]}{\langle b(1-x)^2 + y^2\rangle} \), which is a deformation of the product \( \mathbb{G}_m^S \times \alpha_2^S \). In particular, when \( S \) is an algebraically closed field of characteristic \( 2 \) and \( a = 0 \), we find \( \operatorname{Aut}_S(a,b) \simeq \mathbb{G}_m^S \times \alpha_2^S \), the direct product of \( \mathbb{G}_m^S \) and \( \alpha_2^S \).

\item If \( 2 \) is a unit in \( S \), the Hopf algebra representing \( \operatorname{Aut}_S(a,b) \) takes the form \( \frac{S[x, 1/x]}{\langle (a^2 - 4b)(x^2 - 1)\rangle } \). Specifically, when \( S \) is a field of characteristic not equal to \( 2 \) and \( a^2 - 4b = 0 \), we have \( \operatorname{Aut}_S(a,b) \cong \mathbb{G}_m^S \). Otherwise, when \( S \) is a field of characteristic not equal to \( 2 \) and \( a^2 - 4b \neq 0 \), we find \( \operatorname{Aut}_S(a,b) \cong \mu_2^S \).
\end{enumerate}
\end{remark}
We observe that under certain assumptions, a free algebra of rank \( 2 \) or the isomorphism groups can be expressed with fewer parameters. We illustrate this with two examples in the following results.

\begin{corollary}\label{propd}
Let \( a, b, c, d \in S \) and \( R \) be a ring in which \( 2 \) is a regular element and the characteristic is not \( 2 \). We have 
\[
\operatorname{Iso}_S((a,b), (c,d))(R) \simeq \{ w \in R^\times \mid a^2 - 4b = w^2(c^2 - 4d) \text{ and } 2 \mid (cw - a) \}.
\]
Assuming further that \( 2 \) is a unit, we obtain 
\[
\frac{R[x]}{\langle x^2 + ax + b \rangle} \simeq \frac{R[y]}{\langle y^2 - (a^2 - 4b) \rangle}.
\]
\end{corollary}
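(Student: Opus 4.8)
The plan is to read off the first identification directly from the explicit description of the isomorphism functor given in Proposition~\ref{uni}(1), which presents $\operatorname{Iso}_S((a,b),(c,d))(R)$ as the set of pairs
\[
\mathcal{S}(R) = \{(v,w) \in R \times R^\times \mid 2v = cw - a \ \text{and}\ -dw^2 + v^2 + av + b = 0\},
\]
and then to show that the projection $(v,w) \mapsto w$ cuts this down to the set claimed in the statement. The driving point is that $2$ is a non-zero-divisor in $R$ (note this already forces the characteristic to differ from $2$, so that hypothesis is automatic). Hence for a fixed $w$ the linear equation $2v = cw - a$ admits \emph{at most} one solution $v$, and it admits a (then unique) solution exactly when $2 \mid (cw-a)$. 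Consequently $(v,w)\mapsto w$ is injective on $\mathcal{S}(R)$, and a given $w \in R^\times$ lies in its image precisely when $2 \mid (cw - a)$ and the forced value $v$ with $2v = cw - a$ additionally satisfies the quadratic relation $-dw^2 + v^2 + av + b = 0$.

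It then remains to rewrite this quadratic relation purely in terms of $w$. I would multiply it by $4$: since $2$ is regular, so is $4 = 2\cdot 2$, and therefore multiplication by $4$ is injective, so the relation is \emph{equivalent} to its fourfold multiple and nothing is lost. Expanding $4v^2 = (2v)^2 = (cw-a)^2$ and $4av = 2a(2v) = 2a(cw-a)$ and substituting $2v = cw-a$, the cross terms $\pm 2acw$ cancel and the whole expression collapses to $(c^2 - 4d)w^2 - (a^2 - 4b) = 0$, that is, $a^2 - 4b = w^2(c^2 - 4d)$. This gives exactly the stated bijection. The only delicate point, and the mild obstacle, is this equivalence-under-multiplication-by-$4$: it is precisely the regularity of $2$ (equivalently of $4$) that makes scaling the quadratic relation a reversible operation, which is what lets one trade $-dw^2 + v^2 + av + b$ for the discriminant identity; without regularity only one implication would survive.

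For the final assertion, assume moreover that $2 \in R^\times$. One clean route is to specialize the bijection just established to $(c,d) = (0, -(a^2 - 4b))$: then $c^2 - 4d = 4(a^2 - 4b)$, so $w = 2^{-1} \in R^\times$ satisfies both $a^2 - 4b = (2^{-1})^2\cdot 4(a^2 - 4b)$ and $2 \mid (0\cdot w - a)$, whence $\operatorname{Iso}_S((a,b),(0,-(a^2-4b)))(R)$ is nonempty and the two algebras are isomorphic. Equivalently, and more concretely, one completes the square: the element $2\overline{x} + a$ of $\frac{R[x]}{\langle x^2+ax+b\rangle}$ satisfies $(2\overline{x}+a)^2 = a^2 - 4b$, so $\overline{y} \mapsto 2\overline{x}+a$ defines an $R$-algebra homomorphism out of $\frac{R[y]}{\langle y^2 - (a^2-4b)\rangle}$ whose inverse is $\overline{x} \mapsto 2^{-1}(\overline{y}-a)$; invertibility of $2$ makes both maps well defined, yielding the stated isomorphism. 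This is exactly the $p\neq 2$ change of variable recorded just before Corollary~\ref{etale}.
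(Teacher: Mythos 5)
Your proposal is correct and follows essentially the same route as the paper: both arguments exploit the regularity of $2$ (hence of $4$) to multiply the quadratic relation by $4$ reversibly, substitute $2v = cw-a$, and collapse it to the discriminant identity $a^2-4b = w^2(c^2-4d)$, with the final assertion handled by completing the square via $\overline{y}=2\overline{x}+a$. Your write-up is slightly more explicit than the paper's about why the projection $(v,w)\mapsto w$ is a bijection onto the stated set, but the underlying argument is identical.
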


\begin{proof} 
When \( 2 \) is a regular element in \( R \), it follows that \( 4 \) is also a regular element. The equation \( 4v^2 + 4av + 4b - 4dw^2 = 0 \) is equivalent to \( v^2 + av + b - dw^2 = 0 \). 

From the equations $2v = cw - a$ and $-dw^2 + v^2 + av + b = 0$, we deduce that:
\[
\begin{aligned}
0 &= 4v^2 + 4av + 4b - 4dw^2 \\
&= (cw - a)^2 + 2a(cw - a) + 4b - 4dw^2 \\
&= c^2 w^2 - 2cwa + a^2 + 2acw - 2a^2 + 4b - 4dw^2 \\
&= (c^2 - 4d)w^2 - (a^2 - 4b).
\end{aligned}
\]

The equivalence shows that our initial conditions hold. 

The second part of the lemma is clear since completing the square is feasible under the assumption, which concludes the proof. 
\end{proof}

\begin{corollary} \label{regular}
Let \( a, b, c, d \in S \) and \( R \) be a ring with the following properties:
\begin{itemize} 
\item \( 2 \) is a regular element in \( R \),
\item \( 2 \) is either a prime element or a unit in \( R \).
\end{itemize}
Under these conditions, we have
\[
\operatorname{Iso}_S((a,b), (c,d))(R) \simeq \{ w \in R^\times \mid a^2 - 4b = w^2(c^2 - 4d) \}.
\]
\end{corollary}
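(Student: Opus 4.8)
The plan is to bootstrap off Corollary \ref{propd} rather than re-derive everything from Proposition \ref{uni}. First I would observe that the hypotheses here subsume those of Corollary \ref{propd}: if $2$ is a regular element of $R$ (and $R\neq 0$), then $2\neq 0$, so $R$ has characteristic $\neq 2$. Consequently Corollary \ref{propd} applies verbatim and gives
\[
\operatorname{Iso}_S((a,b),(c,d))(R)\simeq\{\,w\in R^\times \mid a^2-4b=w^2(c^2-4d)\ \text{and}\ 2\mid(cw-a)\,\}.
\]
Since this set is obviously contained in $\{\,w\in R^\times \mid a^2-4b=w^2(c^2-4d)\,\}$, the whole content of the corollary is the reverse inclusion: I must show that, under the extra assumption that $2$ is prime or a unit, the divisibility condition $2\mid(cw-a)$ follows automatically from the defining equation $a^2-4b=w^2(c^2-4d)$.

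The key is a short algebraic identity. Starting from $a^2-4b=w^2(c^2-4d)$, I would rearrange to isolate the difference of squares,
\[
c^2w^2-a^2=4(dw^2-b),
\]
factor the left-hand side as $(cw-a)(cw+a)$, and substitute $cw+a=(cw-a)+2a$. This yields
\[
(cw-a)^2 = 4(dw^2-b)-2a(cw-a),
\]
whose right-hand side is manifestly a multiple of $2$. Hence $2\mid(cw-a)^2$ in $R$.

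To finish, I would split into the two cases allowed by the hypothesis. If $2$ is a unit, then $2\mid(cw-a)$ holds trivially. If $2$ is a prime element of $R$, then $2\mid(cw-a)^2=(cw-a)(cw-a)$ forces $2\mid(cw-a)$ by the defining property of a prime element. In either case the divisibility condition is met, so the two sets coincide, proving the corollary. I do not expect any genuine obstacle here: the argument is elementary, and the only points requiring care are checking that the characteristic-$\neq 2$ hypothesis of Corollary \ref{propd} is indeed inherited from regularity of $2$, and interpreting ``$2$ is prime'' as ``$2$ is a prime element of $R$'' so that the implication $2\mid x^2\Rightarrow 2\mid x$ is legitimate.
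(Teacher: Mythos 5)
Your proof is correct and follows essentially the same route as the paper: reduce to Corollary~\ref{propd} and show that the divisibility condition \( 2 \mid (cw-a) \) is automatic once \( 2 \) is prime or a unit. The only difference is cosmetic --- the paper factors \( a^2 - w^2c^2 = (a-wc)(a+wc) \) and applies primality of \( 2 \) to one of the two factors (then replaces \( w \) by \( -w \) if necessary), whereas you complete the square to get \( 2 \mid (cw-a)^2 \) and apply primality to the square, which is slightly cleaner since it yields the divisibility for the given \( w \) directly.
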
 

\begin{proof}
If \( 2 \) is invertible in \( R \), the result follows trivially. 

When \( 2 \) is a prime element in \( R \), we note that if \( a^2 - 4b = w^2(c^2 - 4d) \), then we can express this as 
\[
4(b - w^2d) = a^2 - w^2c^2 = (a - wc)(a + wc).
\]
Since \( 2 \) is prime, it must divide either \( a - wc \) or \( a + wc \). 

By replacing \( w \) with \( -w \) if necessary, we can conclude that the result holds. 
\end{proof}

\begin{example} \label{counter}
If \( 2 \) is a regular element but neither a prime nor a unit in \( R \), the corollary may not hold. Consider \( R = \mathbb{Z}[\sqrt{5}] \), \( A = R[x]/(x^2 + x + 1) \), and \( B = R[y]/(y^2 + \sqrt{5}y + 2) \). In this case, \( 2 \) is irreducible but not prime in \( R \). Both polynomials \( x^2 + x + 1 \) and \( y^2 + \sqrt{5}y + 2 \) share the same discriminant, \( -3 \).

\noindent The existence of an isomorphism from \( A \) to \( B \) is equivalent to finding \( w \in R^\times \) such that \( a^2 - 4b = w^2(c^2 - 4d) \) and \( 2 \mid (cw - a) \), as shown in Corollary \ref{propd}. This implies that \( w^2 = 1 \).

However, the elements \( -1 \pm \sqrt{5} \) are irreducible in \( R \), which means that \( 2 \) cannot divide either \( -1 + \sqrt{5} \) or \( -1 - \sqrt{5} \). Consequently, \( A \) and \( B \) cannot be isomorphic according to Corollary \ref{propd}.
\end{example}

%
\noindent We study the isomorphism classes locally at all prime numbers of $\mathbb{Z}$. 
As shown in Corollary \ref{etale}, separable algebras of rank $2$ over $R$ are either radical or Artin-Schreier over $R$. Let's consider these cases.
\subsubsection*{The Radical Case}
\begin{itemize}
\item The functor \( \operatorname{Iso}_S((0,a), (0,b)) \) is represented by an affine scheme over \( S \), specifically the spectrum of the \( S \)-algebra:
\[
\frac{S[x, 1/x, y]}{\langle 2y, -bx^2 + y^2 + a \rangle}.
\]

\item The functor \( \operatorname{Aut}_S(0,a) \) is represented by an affine group scheme over \( S \), given by the spectrum of the Hopf algebra:
\[
A = \frac{S[x, 1/x, y]}{(2y, a(1 - x^2) + y^2)}
\]
with a coalgebra structure as described in Proposition \ref{uni}.

\begin{itemize}
\item When \( 2 \) is a regular element and \( (0,a) \) defines a separable algebra over \( R \), \( a \) is invertible (see Proposition \ref{torsor}). In this case, we have 
\[
A = \frac{S[x]}{\langle x^2 - 1 \rangle}.
\]
This represents the Hopf algebra of \( \mu_2^S \) when \( S \) is a field of characteristic not equal to \( 2 \).

\item If \( S \) is a field of characteristic \( 2 \), then we have 
\[
A = \frac{S[x, 1/x, y]}{\langle -ax^2 + y^2 + a \rangle}.
\]
Assuming further that \( a = 0 \), we obtain 
\[
A = \frac{S[x, 1/x, y]}{\langle y^2 \rangle},
\]
which represents the Hopf algebra of the group scheme \( \mathbb{G}_m^S \times \alpha_2^S \). If \( a \neq 0 \), then 
\[
A = \frac{S[x, 1/x, y]}{\langle y^2 - a(x - 1)^2 \rangle},
\]
which represents the Hopf algebra of a deformation of the product \( \mathbb{G}_m^S \times \alpha_2^S \).
\end{itemize}
\end{itemize}
\subsubsection*{The Artin-Schreier Case}
\begin{itemize}
\item The functor \( \operatorname{Iso}_S((-1,a), (-1,b)) \) is represented by an affine scheme over \( S \), specifically the spectrum of the \( S \)-algebra:
\[
\frac{S[x, 1/x, y]}{\langle 2y + x - 1, (4b + 1)(y^2 - y) + a - b \rangle}.
\]

\item The functor \( \operatorname{Aut}_S(-1,a) \) is represented by an affine group scheme over \( S \), given by the spectrum of the Hopf algebra:
\[
A = \frac{S[x, 1/x, y]}{\langle 2y + x - 1, (4a + 1)(y^2 - y) \rangle}
\]
with a Hopf algebra structure as described in Proposition \ref{uni}.
When \( S \) is a field of characteristic \( 2 \), this simplifies to 
\[
A = \frac{S[y]}{\langle y^2 - y \rangle},
\]
which represents the constant group scheme \( \left( \frac{\mathbb{Z}}{2\mathbb{Z}} \right)_S \).

 When \( (-1,a) \) defines a separable algebra over \( R \) and \( 4a + 1 \) is invertible (see Proposition \ref{torsor}), we have 
\[
A = \frac{S[x, 1/x, y]}{\langle 2y + x - 1, y^2 - y \rangle}.
\]
\end{itemize}

The following corollary follows from Corollary \ref{regular}, since \( 2 \) is a unit in any \( \mathbb{Z}_{\langle p\rangle } \)-algebra for any prime number \( p \) not equal to \( 2 \).
\begin{corollary} \label{pneq2}
Let \( p \) be a prime number not equal to \( 2 \), \( S \) a \( \mathbb{Z}_{\langle p\rangle} \)-algebra, and let \( a, b, c, d \in S \). 

\begin{enumerate} 
\item The functor of points of the scheme \( \operatorname{Iso}_{S}((a,b), (c,d)) \) is represented by an affine scheme over \( S \), specifically the spectrum of the \( S \)-algebra:
\[
\frac{S[x, 1/x]}{\langle (a^2 - 4b) - x^2(c^2 - 4d) \rangle}.
\]

\item The functor of points of group scheme \( \operatorname{Aut}_{S}(a,b) \) is represented by an affine group scheme over \( S \), given by the spectrum of the Hopf algebra:
\[
A = \frac{S[x, 1/x]}{\langle (a^2 - 4b) - x^2(a^2 - 4b) \rangle}
\]
with comultiplication sending \( \overline{x} \) to \( \overline{x} \otimes \overline{x} \), the antipode sending \( \overline{x} \) to \( \overline{x}^{-1} \), and the counit sending \( \overline{x} \) to \( \overline{1} \). In particular, when \( a^2 - 4b \) is invertible in \( S \), we have $A = \frac{S[x, 1/x]}{\langle 1 - x^2 \rangle},$ which represents the Hopf algebra of \( \mu_2^S \).
\end{enumerate}
\end{corollary}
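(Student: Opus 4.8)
The plan is to deduce everything from Corollary~\ref{regular} together with the explicit presentations of Proposition~\ref{uni}, using the single observation that $2$ is a unit in every $\mathbb{Z}_{\langle p\rangle}$-algebra when $p \neq 2$ (indeed $p \nmid 2$ already makes $2$ invertible in $\mathbb{Z}_{\langle p\rangle}$, hence in $S$ and in every $S$-algebra $R$, the latter being a $\mathbb{Z}_{\langle p\rangle}$-algebra by composition). In particular $2$ is simultaneously a regular element and a unit in every such $R$, so both hypotheses of Corollary~\ref{regular} hold, yielding the natural identification
\[
\operatorname{Iso}_S((a,b),(c,d))(R) \simeq \{ w \in R^\times \mid a^2 - 4b = w^2(c^2 - 4d)\}
\]
for every $S$-algebra $R$. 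It therefore suffices to realise the right-hand side as the functor of points of the claimed $S$-algebra.

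First I would eliminate the variable $y$ from the two-variable presentations of Proposition~\ref{uni}. Since $2$ is invertible, the relation $2y - cx + a = 0$ forces $y = \tfrac{1}{2}(cx - a)$; substituting this into $-dx^2 + y^2 + ay + b$ and clearing the denominator (a unit) collapses the second relation to $(a^2 - 4b) - x^2(c^2 - 4d)$. This produces an isomorphism
\[
\frac{S[x,1/x,y]}{\langle 2y - cx + a,\, -dx^2 + y^2 + ay + b\rangle} \;\simeq\; \frac{S[x,1/x]}{\langle (a^2 - 4b) - x^2(c^2 - 4d)\rangle},
\]
and an $S$-algebra map out of the latter into $R$ is exactly a choice of unit $w = \overline{x}\mapsto w \in R^\times$ satisfying $a^2 - 4b = w^2(c^2 - 4d)$. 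Matching this against the description above, with naturality in $R$ immediate from the functoriality in Proposition~\ref{uni}, proves part~(1).

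For part~(2) I would set $(c,d) = (a,b)$ and run the same elimination on the automorphism Hopf algebra, where now $2y = a(x-1)$ yields the single relation $(a^2-4b)(1 - x^2) = 0$. Under the identification of the functor of points with $\{w \in R^\times\}$, the group law of Proposition~\ref{uni} reduces, after discarding the determined coordinate $v$, to ordinary multiplication $w\cdot w'$; transporting the comultiplication, antipode and counit through the elimination therefore sends $\overline{x}$ to $\overline{x}\otimes\overline{x}$, to $\overline{x}^{-1}$, and to $\overline{1}$ respectively, as claimed. The special case follows at once: when $a^2 - 4b \in S^\times$ one divides the relation by this unit to get $1 - x^2 = 0$, recovering the Hopf algebra $\frac{S[x,1/x]}{\langle 1 - x^2\rangle}$ of $\mu_2^S$. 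The only step needing genuine care, and thus the main ``obstacle'', is verifying that the Hopf structure descends correctly through the elimination of $y$ and coincides with the stated maps; but since the surviving coordinate $w$ carries the plain multiplicative group law, this check is transparent.
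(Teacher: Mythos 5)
Your proposal is correct and follows essentially the same route as the paper, which simply observes that the corollary follows from Corollary~\ref{regular} because $2$ is a unit in any $\mathbb{Z}_{\langle p\rangle}$-algebra for $p \neq 2$. Your additional explicit elimination of $y$ from the presentation in Proposition~\ref{uni} (and the resulting check of the Hopf structure) just fills in details the paper leaves implicit, and the computation is accurate.
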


\section{The structure of the moduli spaces}

\subsection{The structure of the moduli spaces at a given algebra}
In this section, we explore the various moduli spaces evaluated at a given algebra, writing them as quotients of a ring by a specific group action, which we will describe in the following definition. Given an action \( \star \) of a group \( G \) on a set \( B \), we denote the orbit of an element \( b \in B \) as \( \operatorname{o}_\star(b)=\{ g \cdot b | g \in G\} \). The set of all orbits under this action is denoted as \( \sbfrac{B}{G}{\star} \).

\begin{deflem} \label{action} \text{}
Let $R$ be an $S$-algebra. 
\begin{enumerate} 
\item We define the binary operation $\smallstar$ on the set $R^\times \times R$ as follows: 
$$(w,v)\smallstar(w',v')=(ww' , wv' +v),$$
for all $(w, v)$ and $(w', v') \in R^\times \times R$.

\noindent The set $R^\times \times R$ equipped with this operation forms a group with unit element $(1,0)$ and such that the inverse of an element $(w,v)\in R^\times \times R$ is $(w^{-1} ,-w^{-1}v)$. This group is an external semi-direct product of $R$ by $R^\times$, denoted as $R^\times \ltimes_\smallstar R$.
\item We define the right group action $\star$ of the group $R^\times \ltimes_\smallstar R$ on the Cartesian product $R \times R$ as follows: 
$$(a,b)\star(w,v)=(w^{-1}(2v+a) , w^{-2}(v^2+av+b)),$$
for all $(w,v) \in R^\times \ltimes_\smallstar R$ and $(a,b) \in R\times R$. 
\item We define ${\bf S} (R)$ to be the set $\{ (a, b) \in R \times R| a^2-4b\in R^\times\}$, also equal to the set $\{ (a,b) \in R \times R| \exists (r,t) \in R\times R: 2t=ar \wedge -at+2br=1 \}$.\\
 The action $\star$ restricts to an action of $R^\times \ltimes_\smallstar R$ on ${\bf S}(R)$.\\
  We still denote this action by $\star$. 
 \item  We define ${\bf R}(R)$ as the set $\{ (a, b) \in R \times R| \exists r \in R: a=2r \}$.\\
The action $\star$ restricts to an action of $R^\times \ltimes_\smallstar R$ on ${\bf R}(R)$.\\
 We still denote this action by $\star$. 
 \item We define ${\bf AS}(R)$ as the set $\{ (a, b) \in R \times R| \exists r \in R: 2r + a \in R^\times \}$.\\
The action $\star$ restricts to an action of $R^\times \ltimes_\smallstar R$ on ${\bf A}(R)$.\\
 We still denote this action by $\star$.  
\item We define a group action $\diamond$ of the multiplicative group $(R^\times,\cdot)$ on $R$ such that $w \diamond a = w^2  a$, for all $w\in R^\times$ and $a\in R$. \\
This action restricts to an action of $ (R^\times,\cdot)$ on $R^\times$ that we still denote by $\diamond$. 

\item We define ${\bf T}(R)$ as the set $\{(w, v) | w\in R^\times, v\in R \wedge 2v=0\}$. ${\bf T}(R)$ is a subgroup of $R^\times \ltimes_\smallstar R$. We note that in characteristic $2$, ${\bf T}(R)=R^\times \ltimes_\smallstar R$.
\item We define an action $ \pentagram$ of the group ${\bf T}(R)$ on $R$ by $$   a\pentagram (w, v) =w^{-2} (v^2 +a)$$ for all $(w, v)\in {\bf T}(R)$. 

\item We define ${\bf H}(R)$ as the set $\{(1-2v, v) \in R \times R| 1-2v \in R^\times\}$. ${\bf H}(R)$ is a subgroup of $R^\times \ltimes_\smallstar R$. 
\item We define a right group action $\ast$ of the group ${\bf H}(R)$ on the additive group $(R,+)$ as follows: $$a \ast (1-2v, v)  =(1-2v)^{-2} (v^2 -v+a)$$ for all $v\in R$ such that $(1-2v, v)\in {\bf H} (R)$.\\ 
\item We define ${\bf L}(R)$ as the set $\{a\in R| 1-4a\in R^\times\}$. \\
This action restricts to an action of ${\bf H}(R)$ on ${\bf L}(R)$.\\ 
We still denote this action by $\ast$.
\end{enumerate}
\end{deflem}
\begin{proof}
\begin{enumerate}
\item Let \((w, v)\) and \((w', v') \in {R}^\times \times {R}\). We have:

\[
\begin{array}{lll} 
((w,v) \smallstar (w',v')) \smallstar (w'',v'') &=& (ww', wv' + v) \smallstar (w'',v'') \\ 
&=& (ww'w'', ww'v'' + wv' + v) \\ 
&=& (ww'w'', w(w'v'' + v') + v) \\ 
&=& (w,v) \smallstar (w'w'', w'v'' + v') \\ 
&=& (w,v) \smallstar ((w',v') \smallstar (w'',v''))
\end{array}
\]
We also have:
\[
\begin{array}{lll}  (1,0) \smallstar (w,v) &=& (1w, 1 v + 0) = (w,v) \\
&=& (w  1, 0w  + v) = (w,v) \smallstar (1,0)
\end{array}
\]
and:
\[
\begin{array}{lll}  
(w^{-1}, -w^{-1}v) \smallstar (w,v) &=& (w^{-1}w, w^{-1}v - w^{-1}v) = (1,0) \\ 
&=& (ww^{-1}, w(-w^{-1}v) + v) \\ 
&=& (w,v) \smallstar (w^{-1}, -w^{-1}v)
\end{array}
\]

This proves that \({R}^\times \ltimes_\smallstar {R}\) is a group.
\item Let \((w, v)\), \((w', v') \in {R}^\times \times {R}\) and \((a,b) \in {R} \times {R}\). We have:

\[
\begin{array}{lll} 
&& ((a,b) \star (w,v)) \star (w', v') \\ 
&=& (w^{-1}(2v+a), w^{-2}(v^2 + av + b)) \star (w', v') \\ 
&=& \left( w'^{-1}\left(2v' + w^{-1}(2v + a)\right), w'^{-2}\left(v'^2 + (w^{-1}(2v + a))v' + w^{-2}(v^2 + av + b)\right) \right) \\ 
&=& \left( w'^{-1} w^{-1}(2wv' + 2v + a), w'^{-2} w^{-2}(w^2v'^2 + (w(2v + a))v' + v^2 + av + b) \right) \\ 
&=& \left( (ww')^{-1}(2(wv' + v) + a), (ww')^{-2}((wv' + v)^2 + a(wv' + v) + b) \right) \\ 
&=& (a,b) \star (ww', wv' + v) \\ 
&=& (a,b) \star ((w, v) \smallstar (w', v')).
\end{array}
\]
Additionally, we have:
\[
\begin{array}{lll} 
&& (a,b) \star (1,0) = \left( 1^{-1}(2 \times 0 + a), 1^{-2}(0^2 + a \times 0 + b) \right) = (a, b)
\end{array}
\]
This proves that \(\star\) defines a group action.\\
\item If \((a, b) \in \mathbf{S}(R)\), meaning that \(a^2 - 4b \in {R}^\times\), we have:
\[
\begin{array}{lll} 
&& (w^{-1}(2v + a))^2 - 4w^{-2}(v^2 + av + b) \\ 
&=& w^{-2}(4v^2 + 4va + a^2) - 4w^{-2}(v^2 + av + b) \\ 
&=& w^{-2}(a^2 - 4b).
\end{array}
\]
Since \(w^{-2}(a^2 - 4b) \in {R}^\times\) (as \(w \in {R}^\times\)), it follows that \((w^{-1}(2v + a), w^{-2}(v^2 + av + b)) \in \mathbf{S}(R)\).
This proves that \(\star\) induces an action on \(\mathbf{S}(R)\).

It follows that \((w^{-1}(2v + a), w^{-2}(v^2 + av + b)) \in \mathbf{S}(R)\).
This proves that \(\star\) induces an action on \(\mathbf{S}(R)\).\\
\item If \((a, b) \in \mathbf{R}(R)\), meaning that there exists $r \in R$ such that $a=2r$, we have:
$$w^{-1}(2v + a)= w^{-1}(2v + 2r)=2w^{-1}(v + r). $$
This proves that \(\star\) induces an action on \(\mathbf{R}(R)\).
\item If \((a, b) \in \mathbf{AS}(R)\), meaning that there exists $r \in R$ such that $2r+a\in R^\times$. We set $r'= w^{-1} (r-v) \in R$. Then we have:
$$ 2 r' + w^{-1}(2v + a)=  2w^{-1} (r-v)  + w^{-1}(2v + a)= w^{-1}(2r + a). $$
This proves that \(\star\) induces an action on \(\mathbf{AS}(R)\).
\item is clear.

\item Let \((w,v)\) and \((w',v') \in \mathbf{T}(R)\). This means \(w, w' \in R^\times \), \(v, v' \in R\) and \(2v = 2v' = 0\). We have:

\[
(w, v) \smallstar (w', v') = (ww', wv' + v).
\]
with $2( wv' + v)= 0$. Thus, $(w, v) \smallstar (w', v') \in \mathbf{T}(R)$.
Clearly, \((1, 0) \in \mathbf{T}(R)\) and \((w^{-1}, -w^{-1}v) \in \mathbf{T}(R)\) since \(-2w^{-1}v= 0\). This shows that \(\mathbf{T}(R)\) forms a group.

\item is clear, since $(v+v')^2 =v^2 +v'^2$ whenever $2v=2v'=0$. 

\item Let \((w,v), (w',v') \in \mathbf{H}(R)\). This means \(v, v' \in R\), \(w = 1 - 2v\), \(w' = 1 - 2v'\) and $w, w'\in R^\times$. We have:
\[
\begin{array}{lll} 
(w, v) \smallstar (w', v') &=& ((1 - 2v)(1 - 2v'), (1 - 2v)v' + v) \\ 
&=& \left( 1 - 2\left((1 - 2v)v' + v\right), (1 - 2v)v' + v \right)  
\end{array}
\]
Thus, \((w, v) \smallstar (w', v') \in \mathbf{H}(R)\). Clearly, \((1, 0) \in \mathbf{H}(R)\).\\
To prove that the inverse of \((w,v)\) in \(\mathbf{H}(R)\) exists, we need to show that \(w^{-1} = 1 - 2(-w^{-1}v)\). This follows trivially from the definition of \(w^{-1}\), since we have:
\[
w^{-1} w = w^{-1}(1 - 2v) = 1.
\]

\item Let \((w,v), (w',v') \in \mathbf{H}(R)\) and \(a \in R\). This means \(v, v' \in R\), \(w = 1 - 2v\), \(w' = 1 - 2v'\), and \(w, w' \in \mathbb{R}^\times\).

\[
\begin{array}{lll} 
&& (a \ast (w,v)) \ast (w', v') \\ 
&=& (1-2v)^{-2} (v^2 - v + a) \ast (w', v') \\ 
&=& (1 - 2v')^{-2} \left(v'^2 - v' + (1 - 2v)^{-2} (v^2 - v + a)\right) \\ 
&=& w'^{-2} \left(w^{-2} (w^2 v'^2 + 2wv'v + v^2) + w^{-2} (-wv' - v + a)\right) \\ 
&=& w'^{-2} w^{-2} \left((wv' + v)^2 - (wv' + v) + a\right) \\ 
&=& (1 - 2(wv' + v))^{-2} \left((wv' + v)^2 - (wv' + v) + a\right) \\ 
&=& a \ast (ww', wv' + v) \\ 
&=& a \ast ((w, v) \smallstar (w', v'))
\end{array}
\]

Clearly, \(a \ast (1, 0) = a\). This shows that \(\ast\) defines a right group action.

\item Let \(a \in \mathbf{L}(R)\), meaning \(1 - 4a \in {R}^\times\), and let \((w,v) \in \mathbf{H}(R)\), meaning \(1 - 2v \in {R}^\times\). We have:

\[
a \ast (w,v) = (1 - 2v)^{-2} (v^2 - v + a),
\]

and 

\[
\begin{array}{lll} 
1 - 4(a \ast (w,v)) &=& (1 - 2v)^{-2} \left((1 - 2v)^{2} - 4(v^2 - v + a)\right) \\ 
&=& (1 - 2v)^{-2} \left((1 - 4v + 4v^2) - 4(v^2 - v + a)\right) \\ 
&=& (1 - 2v)^{-2} (1 - 4a).
\end{array} 
\]

Since \(1 - 4(a \ast (w,v)) \in {R}^\times\), we conclude that \(a \ast (w,v) \in \mathbf{L}(R)\). This proves that \(\ast\) also defines an action on \(\mathbf{L}(R)\).

%
%
%

\end{enumerate}
\end{proof} 
From Proposition \ref{radical}, Proposition \ref{AS}, Proposition \ref{uni}, Proposition \ref{pneq2} and the notation of Definition \ref{action}, we obtain
\begin{proposition} \label{propmain}
Let \( R \) be a \( S \)-algebra. The set \(\af{{\sf P}}{2}{R}{S}\) is in bijection with:

\begin{enumerate} 
    \item \text{}
    \begin{itemize}
        \item \( \displaystyle \sbfrac{R \times R}{R^\times \ltimes_\smallstar R}{\star} \), when \({\sf P} = {\sf F}\).
        \item \(\displaystyle \sbfrac{{\bf S}(R)}{R^\times \ltimes_\smallstar R}{\star} \), when \({\sf P} = {\sf SF}\).
        \item \(\displaystyle \sbfrac{{\bf R}(R)}{R^\times \ltimes_\smallstar R}{\star} \), when \({\sf P} = {\sf R}\).
        \item \( \displaystyle \sbfrac{{\bf AS}(R)}{R^\times \ltimes_\smallstar R}{\star} \), when \({\sf P} = {\sf AS}\).
    \end{itemize}
    These bijections all send \([R[x]/\langle x^2 + ax + b\rangle ]_{{\sf P}, 2, R}\) to \(o_\star(a, b)\).

    \item \text{}
    \begin{itemize}
        \item \(\displaystyle \sbfrac{R}{{\bf T}(R)}{\pentagram} \), when \({\sf P} = {\sf R}\).
        \item \( \displaystyle \sbfrac{R}{R^\times}{\diamond} \), when \({\sf P} \in \{ {\sf F}, {\sf R}\}\) and \(S\) is a \(\mathbb{Z}_{\langle p \rangle}\)-algebra, where \(p\) is a prime distinct from \(2\).
        \item \(\displaystyle \sbfrac{R^\times}{R^\times}{\diamond} \), when \({\sf P} = {\sf SR}\) and \(S\) is a \(\mathbb{Z}_{\langle p \rangle}\)-algebra, where \(p\) is a prime distinct from \(2\).
    \end{itemize}
    These bijections all send \([R[x]/\langle x^2 + a\rangle ]_{{\sf P}, 2, R}\) to \(o_\diamond(a)\).

    \item \text{}
    \begin{itemize} 
        \item \( \displaystyle \sbfrac{R}{{\bf H}(R)}{\ast} \), when \({\sf P} = {\sf AS}\).
        \item \( \displaystyle \sbfrac{{\bf L}(R)}{{\bf H}(R)}{\ast} \), when \({\sf P} = {\sf SAS}\).
    \end{itemize}
    This bijection sends \([R[x]/\langle x^2 - x + a\rangle ]_{{\sf P}, 2, R}\) to \(o_\ast(a)\).
\end{enumerate}

In particular, there exists a bijection from:

\begin{itemize} 
    \item \(\displaystyle \sbfrac{{\bf AS}(R)}{R^\times \ltimes_\smallstar R}{\star} \) to \( \sbfrac{R}{{\bf H}(R)}{\ast} \) sending \(o_\star(a, b)\) to \(o_\ast(u^{-2}(-b + v^2 + uv))\), where \(u \in R^\times\) and \(v \in R\) such that \(u = a - 2v\).
    \item \( \displaystyle \sbfrac{{\bf R}(R)}{R^\times \ltimes_\smallstar R}{\star} \) to \( \sbfrac{R}{{\bf T}(R)}{\pentagram} \) sending \(o_\star(a, b)\) to \(o_\ast(b + v^2)\), where \(v \in R\) such that \(a = 2v\). Moreover, \(\displaystyle R \times R = {\bf R}(R)\) when \(S\) is a \(\mathbb{Z}_{\langle p \rangle}\)-algebra, where \(p\) is a prime distinct from \(2\).
\end{itemize}
\end{proposition}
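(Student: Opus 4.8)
The plan is to reduce everything to two facts already in hand: Proposition~\ref{free}, which says that every free rank-$2$ algebra has the form $R[x]/\langle x^2+ax+b\rangle$, and Proposition~\ref{uni}, which describes the isomorphisms between two such algebras. I would first settle the case ${\sf P}={\sf F}$ and then obtain every other assertion by restricting to a $\star$-stable subset or by re-parametrising. Concretely, consider the map
\[
q\colon R\times R \longrightarrow \af{{\sf F}}{2}{R}{S},\qquad (a,b)\longmapsto [R[x]/\langle x^2+ax+b\rangle],
\]
which is surjective by Proposition~\ref{free}. The crux is to identify its fibres with the $\star$-orbits. Unwinding Definition-Lemma~\ref{action}, the relation $(c,d)=(a,b)\star(w,v)$ says exactly that $c=w^{-1}(2v+a)$ and $d=w^{-2}(v^2+av+b)$, equivalently $2v=cw-a$ and $-dw^2+v^2+av+b=0$. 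By Proposition~\ref{uni}(1) these are precisely the equations defining $\operatorname{Iso}_S((a,b),(c,d))(R)$, the group element $(w,v)$ corresponding, after transposing coordinates, to the isomorphism $\overline{x}\mapsto w\overline{y}+v$; moreover the group law $\smallstar$ matches composition of isomorphisms by Proposition~\ref{uni}(2). Hence $q(a,b)=q(c,d)$ if and only if $(a,b)$ and $(c,d)$ share a $\star$-orbit, and $q$ descends to the bijection $\sbfrac{R\times R}{R^\times\ltimes_\smallstar R}{\star}\to\af{{\sf F}}{2}{R}{S}$ sending $o_\star(a,b)$ to the class of $R[x]/\langle x^2+ax+b\rangle$.

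For ${\sf P}\in\{{\sf SF},{\sf R},{\sf AS}\}$ the same map restricts. By Proposition~\ref{torsor}, $(a,b)$ defines a separable algebra iff $a^2-4b\in R^\times$, i.e. $(a,b)\in\mathbf{S}(R)$; by Proposition~\ref{radical} it is radical iff $a\in 2R$, i.e. $(a,b)\in\mathbf{R}(R)$; by Proposition~\ref{AS} it is Artin--Schreier iff $(a,b)\in\mathbf{AS}(R)$. Each condition is preserved under isomorphism, and isomorphisms of ${\sf P}$-algebras are ordinary $R$-algebra isomorphisms, so the corresponding moduli set is the image under $q$ of that subset; since Definition-Lemma~\ref{action}(3)--(5) shows each subset is $\star$-stable (a union of orbits), restricting the ${\sf F}$-bijection yields the three remaining bijections of part~(1).

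The statements in parts~(2) and~(3) are parameter reductions. For radical algebras I would apply the generator of Proposition~\ref{radical}: with $a=2v$, the element $\overline{x}+v$ realises $A\cong R[y]/\langle y^2+(b-v^2)\rangle$, a one-parameter normal form, so $q$ factors through $R$; recomputing the criterion of Proposition~\ref{uni} between $(0,a)$ and $(0,c)$ forces $2v=0$ and $c=w^{-2}(v^2+a)$, which is the action $\pentagram$ of $\mathbf{T}(R)$, giving $\sbfrac{R}{\mathbf{T}(R)}{\pentagram}$. When $S$ is a $\mathbb{Z}_{\langle p\rangle}$-algebra with $p\neq 2$, so that $2$ is a unit (Corollary~\ref{pneq2}), $\mathbf{T}(R)$ collapses to $R^\times$ and $\pentagram$ to $\diamond$, while $\mathbf{R}(R)=R\times R$ and ${\sf F}={\sf R}$ by Proposition~\ref{radical}; imposing separability confines the parameter to $R^\times$, giving the ${\sf SR}$ case. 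The Artin--Schreier reductions run in parallel, using the generator of Proposition~\ref{AS} to reach $x^2-x+a$, recomputing to obtain $\ast$ and $\mathbf{H}(R)$, and cutting out $\mathbf{L}(R)$ by $1-4a\in R^\times$ for ${\sf SAS}$. The two closing ``in particular'' bijections then follow by composing the part-(1) and part-(2)/(3) bijections through $\af{{\sf P}}{2}{R}{S}$, with the explicit orbit images read off from the normal-form constants produced by Propositions~\ref{radical} and~\ref{AS}.

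The main obstacle I anticipate is organisational rather than conceptual: one must check, uniformly across all six families, that the isomorphism relation ``as a ${\sf P}$-algebra'' coincides with the $\star$-orbit relation, that each distinguished subset is genuinely a union of orbits, and that the reduced normal-form constant (for instance $b-v^2$ in the radical case) is independent of the choice of $v$ with $2v=a$ -- the last point relying on the $2$-torsion elements of $\mathbf{T}(R)$ (respectively $\mathbf{H}(R)$) absorbing the ambiguity. Keeping the transposition $(v,w)\leftrightarrow(w,v)$ and the sign conventions of the completing-the-square steps consistent is where the real bookkeeping risk lies.
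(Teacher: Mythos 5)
Your proposal is correct and follows essentially the same route as the paper, which states the proposition without a written proof, deriving it directly from Propositions~\ref{free}, \ref{radical}, \ref{AS}, \ref{torsor}, \ref{uni} and Definition-Lemma~\ref{action}. Your elaboration -- identifying the fibres of $(a,b)\mapsto[R[x]/\langle x^2+ax+b\rangle]$ with the $\star$-orbits via the isomorphism criterion of Proposition~\ref{uni}, restricting to the $\star$-stable subsets ${\bf S}(R)$, ${\bf R}(R)$, ${\bf AS}(R)$, and then performing the one-parameter reductions -- is exactly the intended argument.
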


By examining the bijections at the end of the proposition, we observe that in these cases, only a single parameter is needed to describe such extensions up to isomorphism. Thus, the minimal number of parameters required is 1. This minimal number could be interpreted as the dimension of the moduli space.

\begin{remark} In this remark, we describe the moduli space at a given field $k$. 
\begin{enumerate}
\item {\sf Algebraically closed case: }
\begin{itemize} 
\item When $k$ is algebraically closed of characteristic not $2$, 
\begin{itemize} 
\item$\displaystyle \afz{{\sf F}}{2}{k}\simeq  \afz{{\sf R}}{2}{k}\simeq\afz{{\sf AS}}{2}{k} \simeq  \{ \operatorname{o}_\star (0,0) , \operatorname{o}_\star (0,1) \}$
\item $ \displaystyle \afz{{\sf SR}}{2}{k}\simeq \afz{{\sf SF}}{2}{k}\simeq\{\operatorname{o}_\star (0,1) \}$.
\end{itemize}
\item When $k$ is algebraically closed of characteristic $2$, 
\begin{itemize} 
\item$\displaystyle \afz{{\sf F}}{2}{k}\simeq  \{ \operatorname{o}_\star (0,0) , \operatorname{o}_\star (1,0) \}$ .
\item $\displaystyle \afz{{\sf SF}}{2}{k} \simeq \afz{{\sf AS}}{2}{k} \simeq \{  \operatorname{o}_\star (1,0) \}$.
\item $\displaystyle \afz{{\sf R}}{2}{k} \simeq \{ \operatorname{o}_\star (0,0) \}$.
\item $\displaystyle \afz{{\sf SR}}{2}{k}\simeq \emptyset$
\end{itemize}
\end{itemize} 
\item {\sf General case:} 
\begin{itemize} 
\item When $k$ is a field of characteristic not $2$, 
\begin{itemize} 
\item$\displaystyle \afz{{\sf F}}{2}{k} \simeq \afz{{\sf AS}}{2}{k}\simeq \afz{{\sf R}}{2}{k} \simeq  \{ \operatorname{o}_\star (0,a) | a \in k \} \simeq  \sbfrac{k}{k^\times}{\diamond}$.
\item $\displaystyle \afz{{\sf SF}}{2}{k}\simeq \afz{{\sf SR}}{2}{k} \simeq \{  \operatorname{o}_\star (0,a)| a\in k^\times \}\simeq  \sbfrac{k^\times}{k^\times}{\diamond}$.
\end{itemize}
\item When $k$ is a field of characteristic $2$, 
\begin{itemize} 
\item$\displaystyle \afz{{\sf F}}{2}{k}\simeq  \{ \operatorname{o}_\star (0,a) , \operatorname{o}_\star (1,b)|a \in k, b \in k \}\simeq \sbfrac{k}{k^\times}{\diamond} \coprod \sbfrac{k}{{\bf H}(k)}{\ast}$ .
\item $\displaystyle \afz{{\sf SF}}{2}{k} \simeq \afz{{\sf AS}}{2}{k} \simeq \{  \operatorname{o}_\star (1,b)|b \in k \}\simeq \sbfrac{k}{{\bf H}(k)}{\ast}$.
\item $\displaystyle \afz{{\sf R}}{2}{k} \simeq \{ \operatorname{o}_\star (0,a)|a\in k \}\simeq  \sbfrac{k}{k^\times}{\diamond}$.
\item $\displaystyle \afz{{\sf SR}}{2}{k}\simeq \emptyset$
\end{itemize}
\end{itemize} 
\end{enumerate}
\end{remark}

\subsection{The structure of the moduli spaces as a functor} Given an action \( \star \) of a group scheme \( G \) on a scheme \( X \) over \( S \), we denote by \( \sbfrac{X}{G}{\star} \) the functor that associates to any \( S \)-scheme \( T \) the quotient \( \sbfrac{X(R)}{G(R)}{\star} \) with respect to the induced action of the group \( G(R) \) on the set \( X(R) \). We will now introduce the affine schemes and group schemes necessary to define the various moduli spaces for free algebras of rank \(2\). The proof is omitted, as it can be easily obtained and is similar to the approach used in the proof of Proposition \ref{uni}.

\begin{deflem} \label{gs1}

\begin{enumerate}
\item We define the affine group scheme \(\mathbb{G}^S_m \ltimes_\smallstar \mathbb{G}^S_a\) to be the group scheme such that \(\mathbb{G}^S_m \ltimes_\smallstar \mathbb{G}^S_a (R) = R^\times \ltimes_\smallstar R\) for any \(S\)-algebra \(R\). This group scheme is representable by the Hopf algebra over \(S\), \(S[x, 1/x, y]\), endowed with:
\begin{itemize}
    \item the comultiplication that sends \(x\) to \(x \otimes x\) and \(y\) to \(x \otimes y + y \otimes 1\),
    \item the antipode that sends \(x\) to \(x^{-1}\) and \(y\) to \(-x^{-1} y\), and
    \item the counit that sends \(x\) to \(1\) and \(y\) to \(0\).
\end{itemize}

\item We define an action \(\star\) of \(\mathbb{G}^S_m \ltimes_\smallstar \mathbb{G}^S_a\) on \(\mathbb{A}_S^2\) via the coaction of the Hopf algebra \(S[x, 1/x, y]\) on the algebra \(S[z, t]\), sending \(z\) to \(1 \otimes 2x^{-1} y + z \otimes x^{-1}\) and \(t\) to \(1 \otimes x^{-2} y^2 + z \otimes x^{-2} y + t \otimes x^{-2}\).

\item We define the affine subscheme \(\mathbb{S}_S\) of \(\mathbb{A}_S^2\) as the scheme such that \(\mathbb{S}_S(R) = {\bf S}(R)\) for any \(S\)-algebra \(R\). \(\mathbb{S}_S\) is representable by the \(S\)-algebra \(\frac{S[x, 1/x, y, z]}{\langle y^2 - 4z - x \rangle}\) and also by \(\frac{S[x, y, z, t]}{\langle 2t - xz, -xt + 2yz - 1 \rangle}\). \(\mathbb{S}_S\) is a closed subscheme of \(\mathbb{A}_S^2 \times \mathbb{G}^S_m\) and also of $\mathbb{A}_S^4$. The action \(\star\) restricts to an action of \(\mathbb{G}^S_m \ltimes_\smallstar \mathbb{G}^S_a\) on \(\mathbb{S}_S\), which we continue to denote by \(\star\).

\item We define the sub-affine scheme \(\mathbb{R}_S\) of \(\mathbb{A}_S^2\) as the scheme such that \(\mathbb{R}_S(R) = {\bf R}(R)\) for any \(S\)-algebra \(R\). \(\mathbb{R}_S\) is representable by the \(S\)-algebra \(\frac{S[x, y, z]}{\langle x - 2z \rangle}\). \(\mathbb{R}_S\) is a closed subscheme of \(\mathbb{A}_S^3\). The action \(\star\) restricts to an action of \(\mathbb{G}^S_m \ltimes_\smallstar \mathbb{G}^S_a\) on \(\mathbb{R}_S\), which we continue to denote by \(\star\).

\item We define the sub-affine scheme \(\mathbb{AS}_S\) of \(\mathbb{A}_S^2\) as the scheme such that \(\mathbb{AS}_S(R) = {\bf AS}(R)\) for any \(S\)-algebra \(R\). \(\mathbb{AS}_S\) is representable by the \(S\)-algebra \(\frac{S[x, 1/x, y, z]}{\langle y + 2z + x \rangle}\). \(\mathbb{AS}_S\) is a closed subscheme of \(\mathbb{A}_S^2 \times \mathbb{G}^S_m\). The action \(\star\) restricts to an action of \(\mathbb{G}^S_m \ltimes_\smallstar \mathbb{G}^S_a\) on \(\mathbb{AS}_S\), which we continue to denote by \(\star\).

\item We define an action \(\diamond\) of the multiplicative group scheme \(\mathbb{G}^S_m\) over \(S\) on the affine line \(\mathbb{A}_S^1\) via the coaction of the Hopf algebra \(\mathbb{Z}[x, 1/x]\) on \(\mathbb{Z}[z]\), sending \(x\) to \(z \otimes x^2\). This action induces an action on \(\mathbb{G}^S_m\), which we also denote by \(\diamond\).

\item We define the closed affine subgroup scheme \(\mathbb{T}_S\) of \(\mathbb{G}^S_m \ltimes_\smallstar \mathbb{G}^S_a\) such that \(\mathbb{T}_S(R) = T(R)\) for any \(S\)-algebra \(R\). \(\mathbb{T}_S\) is represented by the Hopf algebra \(\frac{S[x, 1/x, y]}{\langle 2y \rangle}\). 

\item We define an action of \(\mathbb{T}_S\) on \(\mathbb{A}_S^1\) via the coaction of the Hopf algebra \(\frac{S[x, 1/x, y]}{\langle 2y + 1 - x\rangle }\) on \(S[z]\), sending \(z\) to \(1 \otimes 2\overline{x}^{-2}\overline{y}^2 + z \otimes \overline{x}^{-2}\).

\item We define the closed affine subgroup scheme \(\mathbb{H}_S\) of \(\mathbb{G}^S_m \ltimes_\smallstar \mathbb{G}^S_a\) as the group scheme such that \(\mathbb{H}_S(R) = {\bf H}(R)\) for any \(S\)-algebra \(R\). \(\mathbb{H}_S\) is representable by the Hopf algebra \(\frac{S[x, 1/x, y]}{\langle 2y + 1 - x \rangle}\), with:
\begin{itemize}
    \item the comultiplication sending \(\overline{x}\) to \(\overline{x} \otimes \overline{x}\) and \(\overline{y}\) to \(\overline{x} \otimes \overline{y} + \overline{y} \otimes \overline{1}\),
    \item the antipode sending \(\overline{x}\) to \(\overline{x}^{-1}\) and \(\overline{y}\) to \(-\overline{x}^{-1} \overline{y}\), and
    \item the counit sending \(\overline{x}\) to \(\overline{1}\) and \(\overline{y}\) to \(\overline{0}\).
\end{itemize}

\item We define the affine scheme \(\mathbb{L}_S\) such that \(\mathbb{L}_S(R) = L(R)\) for any \(S\)-algebra \(R\). \(\mathbb{L}_S\) is represented by the \(S\)-algebra \(\frac{S[x, 1/x, y]}{\langle 1 - 4y - x \rangle}\). \(\mathbb{L}_S\) is a closed subscheme of \(\mathbb{A}_S^1 \times \mathbb{G}_m^S\).

\item We define an action \(\ast\) of \(\mathbb{H}_S\) on \(\mathbb{A}_S^1\) via the coaction of the Hopf algebra \(\frac{S[x, 1/x, y]}{\langle 2y + 1 - x\rangle }\) on \(S[z]\) such that the comultiplication sends \(z\) to \(1 \otimes 2\overline{x}^{-1}\overline{y} + z \otimes \overline{x}^{-1}\). This action induces an action of \(\mathbb{H}_S\) on \(\mathbb{L}_S\), which we continue to denote by \(\ast\).

\end{enumerate}
\end{deflem}


From Proposition \ref{propmain} and Definition-Lemma \ref{gs1}, we can deduce automatically the next theorem:
\begin{theorem}\label{main}
 The functor \(\af{{\sf P}}{2}{-}{S}\) is natually in correspondence with the functor

    \begin{itemize}
        \item \(\displaystyle \sbfrac{\mathbb{A}_S^2}{\mathbb{G}^S_m \ltimes_\smallstar \mathbb{G}_a^S}{\star} \), when \({\sf P} = {\sf F}\).
        \item \( \displaystyle \sbfrac{\mathbb{S}_S}{\mathbb{G}^S_m \ltimes_\smallstar \mathbb{G}_a^S}{\star} \), when \({\sf P} = {\sf SF}\).
        \item \( \displaystyle\sbfrac{\mathbb{R}_S}{\mathbb{G}^S_m \ltimes_\smallstar \mathbb{G}_a^S}{\star}\) and \( \sbfrac{\mathbb{A}_S^1}{\mathbb{T}_S}{\pentagram} \), when \({\sf P} = {\sf R}\).
        \item \(\displaystyle \sbfrac{\mathbb{AS}_S}{\mathbb{G}^S_m \ltimes_\smallstar \mathbb{G}_a^S}{\star} \) and \( \sbfrac{\mathbb{A}_S^1}{\mathbb{H}_S}{\ast} \), when \({\sf P} = {\sf AS}\).
        \item \(\displaystyle \sbfrac{\mathbb{A}_S^1}{\mathbb{G}^S_m}{\diamond} \), when \({\sf P} \in \{ {\sf F}, {\sf R}\}\) and \(S\) is a \(\mathbb{Z}_{\langle p \rangle}\)-algebra, where \(p\) is a prime distinct from \(2\).
        \item \(\displaystyle \sbfrac{\mathbb{G}^S_m}{\mathbb{G}^S_ms}{\diamond} \), when \({\sf P} = {\sf SR}\) and \(S\) is a \(\mathbb{Z}_{\langle p \rangle}\)-algebra, where \(p\) is a prime distinct from \(2\).
        \item \( \displaystyle \sbfrac{\mathbb{L}_S}{\mathbb{H}_S}{\ast} \), when \({\sf P} = {\sf SAS}\).
    \end{itemize}
We have the following embeddings 
\begin{itemize} 
\item $\displaystyle \sbfrac{\mathbb{S}_S}{\mathbb{G}^S_m \ltimes_\smallstar \mathbb{G}_a^S}{\star}\hookrightarrow \sbfrac{\mathbb{A}_S^2}{\mathbb{G}^S_m \ltimes_\smallstar \mathbb{G}_a^S}{\star}$, 
\item \(\displaystyle \sbfrac{\mathbb{A}_S^1}{\mathbb{T}_S}{\pentagram} \simeq  \sbfrac{\mathbb{R}_S}{\mathbb{G}^S_m \ltimes_\smallstar \mathbb{G}_a^S}{\star}  \hookrightarrow \sbfrac{\mathbb{A}_S^2}{\mathbb{G}^S_m \ltimes_\smallstar \mathbb{G}_a^S}{\star} \),
\item \( \displaystyle \sbfrac{\mathbb{L}_S}{\mathbb{H}_S}{\ast} \hookrightarrow \sbfrac{\mathbb{A}_S^1}{\mathbb{H}_S}{\ast} \simeq \sbfrac{\mathbb{AS}_S}{\mathbb{G}^S_m \ltimes_\smallstar \mathbb{G}_a^S}{\star}\hookrightarrow  \sbfrac{\mathbb{A}_S^2}{\mathbb{G}^S_m \ltimes_\smallstar \mathbb{G}_a^S}{\star}   \).
\end{itemize}
\end{theorem}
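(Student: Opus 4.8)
The plan is to assemble the theorem from the pointwise bijections of Proposition~\ref{propmain} together with the scheme-theoretic identifications of Definition-Lemma~\ref{gs1}; since both ingredients are already in place, the only genuine content is to verify that everything is natural in the test algebra and that the asserted embeddings are monomorphisms of functors.

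First I would fix an $S$-algebra $R$. For each family ${\sf P}$, Proposition~\ref{propmain} supplies a bijection between $\af{{\sf P}}{2}{R}{S}$ and an orbit set $\sbfrac{X(R)}{G(R)}{\star}$ (respectively with $\diamond$, $\pentagram$, or $\ast$), sending $[R[x]/\langle x^2+ax+b\rangle]$ to the orbit $o(a,b)$. By Definition-Lemma~\ref{gs1}, each underlying set $X(R)\in\{R\times R,\,{\bf S}(R),\,{\bf R}(R),\,{\bf AS}(R),\,{\bf L}(R),\,R,\,R^\times\}$ is the set of $R$-points of the corresponding scheme ($\mathbb{A}_S^2$, $\mathbb{S}_S$, $\mathbb{R}_S$, $\mathbb{AS}_S$, $\mathbb{L}_S$, $\mathbb{A}_S^1$, $\mathbb{G}_m^S$), each acting group $G(R)$ is the group of $R$-points of the corresponding group scheme, and the actions coincide with the coactions recorded there. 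Because the quotient functor $\sbfrac{X}{G}{\star}$ was defined by $R\mapsto\sbfrac{X(R)}{G(R)}{\star}$, each orbit set above is exactly the value at $R$ of the claimed quotient functor.

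It then remains to check naturality. Given a morphism $\varphi:R\to R'$ of $S$-algebras, base change sends $R[x]/\langle x^2+ax+b\rangle$ to $R'[x]/\langle x^2+\varphi(a)x+\varphi(b)\rangle$, so $\af{{\sf P}}{2}{\varphi}{S}$ carries $o(a,b)$ to $o(\varphi(a),\varphi(b))$. On the quotient side, the transition map is induced by $X(\varphi):X(R)\to X(R')$, which on coordinates is again $(a,b)\mapsto(\varphi(a),\varphi(b))$; as all the action formulas in Definition-Lemma~\ref{action} have integer coefficients, $X(\varphi)$ is equivariant and descends to orbits. Both squares therefore commute, and the pointwise bijections assemble into natural isomorphisms of functors, establishing every correspondence in the list.

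Finally the embeddings and equivalences. The three monomorphisms arise from the equivariant inclusions ${\bf S}(R),{\bf R}(R),{\bf AS}(R)\hookrightarrow R\times R$ and ${\bf L}(R)\hookrightarrow R$ of Definition-Lemma~\ref{action}: since each subset is stable under $\star$ (resp.\ $\ast$), it is a union of orbits, so passing to quotients yields injections that are natural in $R$ by the argument above. The two displayed isomorphisms $\sbfrac{\mathbb{A}_S^1}{\mathbb{T}_S}{\pentagram}\simeq\sbfrac{\mathbb{R}_S}{\mathbb{G}^S_m \ltimes_\smallstar \mathbb{G}_a^S}{\star}$ and $\sbfrac{\mathbb{A}_S^1}{\mathbb{H}_S}{\ast}\simeq\sbfrac{\mathbb{AS}_S}{\mathbb{G}^S_m \ltimes_\smallstar \mathbb{G}_a^S}{\star}$ are precisely the ``in particular'' bijections at the end of Proposition~\ref{propmain}, sending $o_\star(a,b)$ to $o_\pentagram(b+v^2)$ and to $o_\ast(u^{-2}(-b+v^2+uv))$ respectively; their naturality follows because the completing-the-square substitutions are given by the same polynomial formulas and hence commute with $\varphi$. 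I expect the main obstacle to be purely bookkeeping, namely confirming that these substitution formulas are well defined on orbits and compatible with base change, rather than any new conceptual difficulty, since Proposition~\ref{propmain} has already carried out the underlying algebra at each $R$.
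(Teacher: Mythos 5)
Your proposal is correct and follows exactly the route the paper intends: the paper gives no written proof, stating only that the theorem ``can be deduced automatically'' from Proposition~\ref{propmain} and Definition-Lemma~\ref{gs1}, and your write-up supplies precisely the missing bookkeeping (identification of orbit sets with $R$-points, naturality of the bijections under base change, and the equivariant inclusions giving the embeddings). No discrepancy with the paper's approach.
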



\bibliographystyle{Abbrv}

\end{document}